\numberwithin{equation}{section}
\definecolor{dkblue}{RGB}{1,31,91} 
\def\p{\partial}
\def\be{{\beta}}
\def\p{\partial}
\def\be{\begin{equation}}
\def\ee{\end{equation}}
\newtheorem{theorem}{Theorem}[section]
\newtheorem{lemma}[theorem]{Lemma}
\newtheorem{proposition}[theorem]{Proposition}
\newtheorem{remark}[theorem]{Remark}
\def\beq{\begin{equation}}
\def\eeq{\end{equation}}
\def\beq{\begin{equation}}
\def\eeq{\end{equation}}
\begin{document}

\title{Self-similar solutions for the Muskat equation}

\author{Eduardo Garc\'ia-Ju\'arez$^\dagger$}

\author{Javier G\'omez-Serrano$^\ddagger$}

\author{Huy Q. Nguyen$^*$}

\author{Beno\^it Pausader$^\mathsection$}

\address{$^{\dagger, \ddagger}$ Departament de Matemàtiques i Informàtica, Universitat de Barcelona, 
Gran Via de les Corts Catalanes 585, 08007 Barcelona, Spain.}
\email{\href{mailto:egarciajuarez@ub.edu}{$^\dagger$egarciajuarez@ub.edu}}\email{\href{mailto:jgomezserrano@ub.edu}{$^\ddagger$jgomezserrano@ub.edu}}

\address{$^{\ddagger,*,\mathsection}$ Department of Mathematics, Brown University, Kassar House, 151 Thayer Street, Providence, RI 02912, USA.
}
\email{\href{javier\_gomez\_serrano@brown.edu}{$^\ddagger$javier\_gomez\_serrano@brown.edu}}
\email{\href{huy\_quang\_nguyen@brown.edu}{$^*$huy\_quang\_nguyen@brown.edu}}
\email{\href{benoit\_pausader@brown.edu}{$^\mathsection$benoit\_pausader@brown.edu}}

\begin{abstract}
We show the existence of self-similar solutions for the Muskat equation. These solutions are parameterized by $0<s \ll 1$; they are exact corners of slope $s$ at $t=0$ and become smooth in $x$ for $t>0$.
\end{abstract}

\maketitle



\section{Introduction}

\subsection{The Muskat problem}
The Muskat problem describes the evolution of the free boundary  between  immiscible and incompressible fluids permeating a porous medium in a gravity field. Each fluid is assumed to have constant physical properties and their velocities are governed by Darcy's law,
\begin{equation*}
    \frac{\mu}{\kappa}u(z,t)=-\nabla p(z,t)-\rho g(0,1),\qquad z\in\mathbb{R}^2,\quad t\in\mathbb{R}_+,
\end{equation*}
where $p$, $u$ denote the pressure and velocity of the fluids, $\rho$, $\mu$ are their density and viscosity, $\kappa$ is the permeability constant of the medium, and $g$ the gravitational constant.
With or without surface tension effects, it has long been known that the problem can be reduced to an evolution equation for the free interface \cite{Cordoba-Gancedo:contour-dynamics-3d-porous-medium-2007,Escher-Simonett:solutions-muskat-surface-tension-1997,Siegel-Caflisch-Howison:global-existence-muskat-2004}.
The case of a two-fluid graph interface \begin{equation*}
    \Gamma(t)=\{(x,f(x,t)):x\in\mathbb{R}\}
\end{equation*} 
with only gravity effects admits a particularly compact form \cite{Cordoba-Gancedo:contour-dynamics-3d-porous-medium-2007}, now called the \textit{Muskat equation}:
\begin{equation}\label{OMuskat}
\begin{split}
\partial_tf=\frac{1}{\pi}\int_{\mathbb{R}}\frac{\partial_x \Delta_\alpha f}{1+(\Delta_\alpha f)^2}d\alpha,\qquad\Delta_\alpha f(x)=\frac{f(x)-f(x-\alpha)}{\alpha},\qquad x\in\mathbb{R}.
\end{split}
\end{equation}
Above, all physical constants have been normalized for notational simplicity.
The Muskat equation is well-posed locally in time for sufficiently smooth initial data, and globally in time if the initial interface is sufficiently flat \cite{Ambrose:well-posedness-hele-shaw-2004, Constantin-Cordoba-Gancedo-Strain:global-existence-muskat-2013, Cordoba-Cordoba-Gancedo:interface-heleshaw-muskat-2011,Cordoba-Gancedo:contour-dynamics-3d-porous-medium-2007,Cordoba-Gancedo:maximum-principle-muskat-2009,Siegel-Caflisch-Howison:global-existence-muskat-2004,Yi:Loca-classical-Muskat-1996,Yi:Global-classical-Muskat-2003}.
Most notably, an initially smooth interface can turn \cite{Castro-Cordoba-Fefferman-Gancedo-LopezFernandez:rayleigh-taylor-breakdown-2012} and later lose regularity in finite time \cite{Castro-Cordoba-Fefferman-Gancedo:breakdown-muskat-2013}. Furthermore, many other behaviors are possible, with interfaces that turn and then go back to the graph scenario \cite{Cordoba-GomezSerrano-Zlatos:stability-shifting-muskat-2015,Cordoba-GomezSerrano-Zlatos:stability-shifting-muskat-II-2017}. Thus, finding criteria for global existence became one of the main questions for the Muskat equation. 
Since equation \eqref{OMuskat} has a natural scaling given by
\begin{equation*}
    f(x,t)\rightarrow \lambda^{-1} f(\lambda x, \lambda t), \qquad \lambda>0,
\end{equation*}
these criteria are stated in terms of critical regularity, i.e.,  spaces that scale like $\dot{W}^{1,\infty}$. In this sense, having the product of the maximal and minimal slopes strictly less than $1$  is sufficient for global existence  \cite{Cameron:global-wellposedness-muskat-slope-less-1-2019}. See also \cite{Deng-Lei-Lin:2d-muskat-monotone-data-2017}. Medium-size initial data in critical spaces but with uniformly continuous slope guarantees global wellposedness \cite{Constantin-Cordoba-Gancedo-Strain:global-existence-muskat-2013}. If the initial data is sufficiently small in $\dot{H}^{\frac32}$, then the slope can be arbitrarily large \cite{Cordoba-Lazar:global-wellposedness-muskat-H32-2018} and even unbounded \cite{Alazard-Nguyen:Endpoint-Sobolev-Muskat-2020,Alazard-Nguyen:Muskat-non-lipschitz-2020}. The result \cite{Alazard-Nguyen:Endpoint-Sobolev-Muskat-2020} also shows local existence and uniqueness in $H^{\frac32}$. This is currently the best (lowest) regularity result in terms of the space of the initial data, which is a problem that has garnered a lot of attention recently (e.g. \cite{Alazard-Lazar:Paranlinearization-Muskat-2020,Alazard-Nguyen:Muskat-Critical-II-2021,Chen-Nguyen-Xu:Muskat-C1-2021,Cheng-GraneroBelinchon-Shkoller:well-posedness-h2-muskat2016,Constantin-Gancedo-Shvydkoy-Vicol:global-regularity-muskat-finite-slope-2017,Matioc:Muskat-2d-formulations-2019,Nguyen:Muskat-Besov-small-2021,Nguyen-Pausader:Paradifferential-Muskat-2020}).

\subsection{Main result} In this paper, we show the existence of self-similar solutions for the Muskat problem. These solutions correspond to the global-in-time evolution of initially exact corners, and thus they do not fit into the aforementioned  results\footnote{The results in \cite{Cameron:global-Muskat-3D-2020}  allow for merely medium size bounded slopes but require sublinear growth of the profile, while our solutions grow linearly in space.}.

We can rewrite \eqref{OMuskat} in terms of a closed system for the slope $h=\partial_x f$:
\begin{equation}\label{DMuskat}
\begin{split}
0=\partial_th-\frac{1}{\pi}\int_{\mathbb{R}}\frac{\Delta_\alpha\partial_xh}{1+(\fint_\alpha h)^2}d\alpha+\frac{2}{\pi}\int_{\mathbb{R}}(\Delta_\alpha h)^2\cdot\frac{\fint_\alpha h}{(1+(\fint_\alpha h)^2)^2}d\alpha,\qquad\fint_\alpha h(x)=\frac{1}{\alpha}\int_{x-\alpha}^x h(z)dz.
\end{split}
\end{equation}
Plugging the ansatz $h(x,t)=k(x/t)$ in \eqref{DMuskat}, we arrive at the equation
\begin{equation}\label{SSMuskat}
\begin{split}
0=Sk+\frac{1}{\pi}\int_{\mathbb{R}}\frac{\Delta_\alpha\partial_{y}k}{1+(\fint_\alpha k)^2}d\alpha-\frac{2}{\pi}\int_{\mathbb{R}}(\Delta_\alpha k)^2\cdot\frac{\fint_\alpha k}{(1+(\fint_\alpha k)^2)^2}d\alpha,\qquad S:=y\partial_y.
\end{split}
\end{equation}
for which we construct a local curve of solutions:
%
\begin{theorem}\label{SSTheorem}
There exists $s_\ast>0$ such that for all $|s|<s_\ast$, there exists a self-similar solution of \eqref{DMuskat} $h_s(x,t)=k_s(x/t)$ satisfying $\lim_{y \to +\infty} k_s(y) = s$. In addition, we have that
\begin{equation*}
\begin{split}
\Vert k_s(y)-s\frac{2}{\pi}\arctan(y)\Vert_{H^1}+|s|\Vert \partial_s k_s(y)-\frac{2}{\pi}\arctan(y)\Vert_{H^1}&\lesssim |s|^3.
\end{split}
\end{equation*}

\end{theorem}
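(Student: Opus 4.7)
The plan is to perform a Lyapunov--Schmidt-style bifurcation around the explicit solution $k_0(y):=\tfrac{2}{\pi}\arctan(y)$ of the linearized equation, which already realizes $\lim_{y\to+\infty}k_0(y)=1$. First, I would verify that the full operator $F(k)$ appearing on the right-hand side of \eqref{SSMuskat} has linearization $\mathcal{L}:=S-|\partial_y|$ at $k=0$, which follows from the identity $\tfrac{1}{\pi}\int_{\mathbb{R}}\Delta_\alpha\partial_y k\,d\alpha=-|\partial_y|k$ (the change of variables $z=y-\alpha$ rewrites the integral as the principal-value convolution defining $-\pi H\partial_y k=-\pi|\partial_y|k$). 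A direct computation then gives $Sk_0(y)=\tfrac{2y}{\pi(1+y^2)}=|\partial_y|k_0(y)$, so $\mathcal{L}k_0=0$. The equation also enjoys the odd-reflection symmetry $k(y)\mapsto-k(-y)$, hence $F$ preserves odd functions and expands as $F(k)=\mathcal{L}k+C(k)+R(k)$ with $C$ cubic (produced by the first nontrivial terms in the Taylor series of $(1+u^2)^{-1}$ and $(1+u^2)^{-2}$) and $R$ of degree $\geq 5$; no even-order terms appear.

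Next, I would seek the solution in the form $k_s=s\,k_0+\phi$ with $\phi$ odd in $H^1(\mathbb{R})$, which automatically encodes the limit $k_s\to s$ at $+\infty$. Using $\mathcal{L}k_0=0$, the equation $F(k_s)=0$ becomes the fixed-point identity
\begin{equation*}
\phi=-\mathcal{L}^{-1}\bigl[C(sk_0+\phi)+R(sk_0+\phi)\bigr].
\end{equation*}
The central linear step is to construct $\mathcal{L}^{-1}$ as a bounded operator on odd $H^1$ data. In Fourier, $\widehat{\mathcal{L}k}(\xi)=-(1+|\xi|)\hat{k}(\xi)-\xi\partial_\xi\hat k(\xi)$, so on $\xi>0$ the equation $\mathcal{L}k=g$ reduces to the first-order ODE $\xi\partial_\xi\hat k+(1+\xi)\hat k=-\hat g$, whose homogeneous solutions are $C\xi^{-1}e^{-\xi}$, corresponding precisely to $\hat{k_0}$. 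Since $\xi^{-1}e^{-\xi}\notin L^2$ near the origin, odd data $\hat g$ (imaginary and odd in $\xi$, vanishing at $\xi=0$) generate a unique $L^2$ solution via an explicit Duhamel formula, and the overall $\xi^{-1}$ factor shows $\mathcal{L}^{-1}$ gains one derivative, mapping $H^1_{\text{odd}}$ into $H^2_{\text{odd}}$.

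With $\mathcal{L}^{-1}$ in hand, the existence of $\phi$ and the bound $\|\phi\|_{H^1}\lesssim|s|^3$ follow from a Banach fixed point on the ball $\{\phi\in H^1_{\text{odd}}:\|\phi\|_{H^1}\leq M|s|^3\}$ for suitable $M$: the leading source is $s^3\mathcal{L}^{-1}C(k_0)$, which is well-defined because $k_0'=\tfrac{2}{\pi}(1+y^2)^{-1}\in L^2$ tames the singular integrals in $C$, and the remaining terms are $O(|s|)$-Lipschitz perturbations on the ball. Differentiating the fixed-point identity in $s$ and repeating the contraction for $\partial_s\phi$ produces $\|\partial_s\phi\|_{H^1}\lesssim|s|^2$, which is exactly the stated estimate on $\partial_s k_s-k_0$.

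I expect the principal obstacle to be twofold. First, producing $\mathcal{L}^{-1}$ with optimal mapping properties requires choosing weighted Sobolev norms that correctly handle the Fourier ODE both near its singular point $\xi=0$ and at infinity; this is exactly where the restriction to odd data (which forces $\hat g$ to vanish at $0$) becomes indispensable. Second, and more delicate, the cubic and quintic nonlinearities must be estimated with $k_0\in L^\infty\setminus L^2$ (it tends to $\pm 1$ at $\pm\infty$) plugged into some of their slots; this is only possible because the combinations that actually appear, namely $\Delta_\alpha k_0$ and the fluctuation of $\fint_\alpha k_0$ from its asymptotic value, enjoy much better integrability than $k_0$ itself, which, combined with the smoothing of the outer operator, should suffice to close the nonlinear estimates in $H^1$.
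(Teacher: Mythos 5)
Your skeleton is the same as the paper's: perturb around the arctan profile, invert the linearized operator $\kappa\vert\nabla\vert-S$ by the Fourier-side ODE/Duhamel formula (your homogeneous solution $C\xi^{-1}e^{-\vert\xi\vert}$ and the role of oddness match \eqref{SolDuhamel}--\eqref{FreeSolutions1} exactly), and close with a Banach fixed point on an $O(s^3)$ ball; your derivation of $\partial_sk_s$ from differentiating the fixed point is also the paper's. Two deviations are worth noting. First, you linearize at $k=0$ ($\kappa=1$) rather than at the constant far-field state $s$; the paper takes $\kappa=(1+s^2/3)^{-1}$ in \eqref{DispRel} because the quasilinear coefficient $(1+(\fint_\alpha k)^2)^{-1}$ does not tend to $1$ at infinity, which shifts the principal part by $O(s^2)$. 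This is recoverable in your scheme (the shift is absorbed by a Neumann series, as in Lemma \ref{SSLemA}), but you would discover it only when checking that the linear-in-$\phi$ part of your ``cubic'' term is genuinely perturbative.

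The genuine gap is the sentence asserting that the remaining terms are ``$O(\vert s\vert)$-Lipschitz perturbations on the ball.'' Two distinct difficulties are hidden there. (i) The nonlinearity is quasilinear: $C(k)$ contains $S[k^3/3]$ and the transport term $W[k]\partial_yk$ (see \eqref{SSeq3}), each carrying a full derivative of the unknown, so $C$ is not Lipschitz from $H^1$ to any space on which $\mathcal{L}^{-1}$ returns $H^1$ unless one first rewrites these terms in divergence/normal form (the substitution $h=k+\tfrac{\kappa}{3}(k^2-s^2)k$ in \eqref{SSeq3Pert1} and the identity $\widehat{{\bf T}}_{-1}\partial_y=\widehat{{\bf T}}_0$) so that the one-derivative gain of the resolvent is actually usable. (ii) More seriously, the coefficients built from $k_0$ do not decay, so the needed smallness of $C(sk_0+\phi)$ is not a pointwise statement: it comes from cancellations in the symmetrized bricks $\delta_\alpha[L_s]$ and $J_\alpha[L_s]$ of \eqref{QuadraticBricks} after folding $\alpha\mapsto-\alpha$, together with high/low decompositions in $\alpha$ and a Littlewood--Paley argument for the term with $\partial_\alpha$ falling on the unknown. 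You correctly identify that $\Delta_\alpha k_0$ and the fluctuation of $\fint_\alpha k_0$ are the objects with good integrability, but establishing the quantitative bounds (the analogues of Lemmas \ref{ControlDelta2}--\ref{LemT}) is the bulk of the proof and cannot be dismissed as ``should suffice''; in particular the $L^1\cap L^2$ (rather than $L^2$) control of the forcing is needed to make sense of $\widehat{{\bf T}}_{-1}$ near $\xi=0$, which your parenthetical about $\hat g(0)=0$ only gestures at.
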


\begin{remark}
$(i)$ In particular, we see that $k_s\in L^\infty\setminus\dot{H}^\frac{1}{2}(\mathbb{R})$. $(ii)$ In fact, we will compare $ k_s$ and $(2s/\pi)\arctan((1+s^2/3)y)$, but one can readily see that the difference between the two ans\"atze is $O(s^3)$. $(iii)$ Due to the symmetry we can find solutions with negative $s$ by setting $k_{-s}(y) = -k_{s}(y)$. From now on, we assume $s \geq 0$.
\end{remark}


Despite the numerous works on the Muskat equation and the mathematically equivalent vertical Hele-Shaw problem, self-similar solutions were only known for the simplified thin film Muskat \cite{Escher-Matioc-Matioc:Think-film-Muskat-2012,Gancedo-GraneroBelinchon-Stefano:Thin-film-Muskat-2020,Laurencot-Matioc:Self-similarity-thin-film-Muskat-2017}. See also \cite{Escher-Matioc:Muskat-parabolicity-fingering-2011} where the authors find traveling solutions for the Muskat problem with surface tension effects included. 

From our numerical results, it might look surprising that, no matter how big the slope is, the initial corner instantly smooths out, as opposed to the known ``waiting time'' phenomenon in the Hele-Shaw problem \cite{Abedin-Schwab:Hele-Shaw-dini-2020,Bazaliy-Vasylyeva:Two-phase-Hele-Shaw-corners-without-surface-tension-2014,Bazaliy-Vasylyeva:Two-phase-Hele-Shaw-surface-tension-corner-2011,ChangLara-Guillen-Schwab:Hele-Shaw-2019,Choi-Jerison-Kim:One-phase-Hele-Sahw-Lipschit-2007,Escher-Simonett:solutions-muskat-surface-tension-1997,Kim:uniqueness-existence-hele-shaw-stefan-2003,Kim:long-time-regularity-hele-shaw-2006,Kim:one-phase-hele-shaw-2006}. We must note however that those works correspond to a horizontal Hele-Shaw cell (hence without gravity) and some include fluid injection. 
Moreover, some of these works are in a one-phase setting, which even for Muskat significantly changes the possible behaviors \cite{Alazard:Convexity-Hele-Shaw-2021,Alazard-Meunier-Smets:Lyapunov-Hele-Shaw-2020,Castro-Cordoba-Fefferman-Gancedo:splash-singularities-one-phase-muskat-stable-2016,Dong-Gancedo-Nguyen:Global-one-phase-Muskat-2021,Gancedo-Strain:absence-splash-muskat-SQG-2014}.


\subsection{Outline of the paper}
The rest of the paper is structured as follows.  In Section \ref{notation} we summarize the notation that will be used along the paper. Next, in Section \ref{fixedpoint}, we first extract the quasilinear structure of \eqref{SSMuskat} and rewrite it as a fixed point equation. This section contains the proof of the main Theorem \ref{SSTheorem} via Proposition \ref{PropExistenceSS}. Section \ref{quantitative} contains the analysis of all the terms involved in the equation. We will use key cancellations provided by some ``elementary bricks'' that we will be able to extract through the symmetrization of the nonlinear terms. Finally in Section \ref{SectionNumerics} we illustrate our main Theorem by numerically computing part of the branch of self-similar solutions.

\section{Notations}\label{notation}

\subsection{General notations}

In the following, we fix $\varphi\in C^\infty_c(-4/3,4/3)$, a nonnegative even function such that $\varphi\equiv 1$ on $[-3/4,3/4]$. For simplicity of notation, we let $\fint_{\pm\alpha,0}f$ be an arbitrary function among
\begin{equation*}
\fint_{\pm\alpha,0}f\in\{f,\fint_{\alpha}f,\fint_{-\alpha}f\}.
\end{equation*}
We will work mostly on the Fourier side. We define the Fourier transform as
\begin{equation*}
\begin{split}
\mathcal{F}(f)(\xi)=\widehat{f}(\xi):=\frac{1}{\sqrt{2\pi}}\int_{\mathbb{R}} f(y)e^{-iy\xi}dy.
\end{split}
\end{equation*}
We note that the Fourier transform of a real odd function is an odd function taking purely imaginary values.
We define the Fourier multiplier $\vert\nabla\vert$ by
\begin{equation*}
\mathcal{F}\left\{\vert\nabla\vert f\right\}(\xi)=\vert\xi\vert\widehat{f}(\xi).
\end{equation*}
Given an operator $T$, we let
\begin{equation}\label{That}
\widehat{T}[f]=\mathcal{F}^{-1}T[\widehat{f}]
\end{equation}
be its conjugation by the Fourier transform. To avoid functional analytic considerations, we say that a functional $g\mapsto \mathcal{N}(g)$ is analytic {\it around} a function $L_s$ if for any choice of $g_1,g_2$ in the appropriate space (here $H^1$) the restricted function
\begin{equation*}
\begin{split}
N_{g_1,g_2}(t_1,t_2):=\mathcal{N}(L_s+t_1g_1+t_2g_2)
\end{split}
\end{equation*}
is analytic. In this case, we denote
\begin{equation}\label{ConventionLinMult}
\begin{split}
\mathcal{N}_1[g]:=\frac{d}{dt_1}N_{g,0}(0,0),\qquad 
\mathcal{N}_{\ge2}[g]:=\mathcal{N}(L_s+g)-\mathcal{N}(L_s)-\mathcal{N}_1[g],\qquad\mathcal{N}_{\ge1}[g]:=\mathcal{N}_1[g]+\mathcal{N}_{\ge2}[g],
\end{split}
\end{equation}
and we observe that
\begin{equation}\label{HOTTerms}
\begin{split}
\mathcal{N}_{\ge2}(L_s+g_2)-\mathcal{N}_{\ge2}(L_s+g_1)&=\int_{t_1=0}^1\frac{d}{dt_1}\frac{d}{dt_2}N_{g_1,g_2-g_1}(t_1,0)dt_1+\int_{\theta=0}^1(1-\theta)\frac{d^2}{d\theta^2}N_{g_1,g_2-g_1}(1,\theta)d\theta.
\end{split}
\end{equation}
In these notations, the ``center'' $L_s$ is implicit, but since we will always consider functionals around $L_s$ defined in \eqref{DefLs}, there should be no ambiguity.

\subsection{Algebra of operators}

We will use operators of the form
\begin{equation*}
\begin{split}
\mathcal{N}(f)(y)&:=\int_{\alpha=0}^\infty m(f;\alpha,y)\cdot G(f,\fint_\alpha f,\fint_{-\alpha}f)\frac{d\alpha}{\alpha}
\end{split}
\end{equation*}
associated to some multilinear function $f\mapsto m(f;\alpha,y)$ (i.e. multilinear in $f$ for each fixed $\alpha$, $y$) and some numerical analytic function $G$. For such operators, we compute that
\begin{equation}\label{N1}
\begin{split}
\mathcal{N}_1[g]&=\int_{\alpha=0}^\infty m_1(g,L_s,\dots,L_s;\alpha,y)\cdot G(L_s,\fint_\alpha L_s,\fint_{-\alpha}L_s)\frac{d\alpha}{\alpha}\\
&\quad+\int_{\alpha=0}^\infty m(L_s;\alpha,y)\cdot {\bf v}_g\cdot\nabla G(L_s,\fint_\alpha L_s,\fint_{-\alpha}L_s)\frac{d\alpha}{\alpha},\qquad{\bf v}_g:=(g,\fint_\alpha g,\fint_{-\alpha}g),
\end{split}
\end{equation}
and
\begin{equation}\label{N21}
\begin{split}
\frac{d}{dt_1}\frac{d}{dt_2}N_{g_1,g_2}(t_1,t_2)&=\int_{\alpha=0}^\infty m_2(g_1,g_2,L_{t_1,t_2};\alpha,y)\cdot G(L_{t_1,t_2},\fint_\alpha L_{t_1,t_2},\fint_{-\alpha}L_{t_1,t_2})\frac{d\alpha}{\alpha}\\
&\quad+\int_{\alpha=0}^\infty m_1(g_1,L_{t_1,t_2};\alpha,y)\cdot {\bf v}_{g_2}\cdot\nabla G(L_{t_1,t_2},\fint_\alpha L_{t_1,t_2},\fint_{-\alpha}L_{t_1,t_2})\frac{d\alpha}{\alpha}\\
&\quad+\int_{\alpha=0}^\infty m_1(g_2,L_{t_1,t_2};\alpha,y)\cdot {\bf v}_{g_1}\cdot\nabla G(L_{t_1,t_2},\fint_\alpha L_{t_1,t_2},\fint_{-\alpha}L_{t_1,t_2})\frac{d\alpha}{\alpha}\\
&\quad+\int_{\alpha=0}^\infty m(L_{t_1,t_2};\alpha,y)\cdot\nabla^2 G(L_{t_1,t_2},\fint_\alpha L_{t_1,t_2},\fint_{-\alpha}L_{t_1,t_2})[{\bf v}_{g_1},{\bf v}_{g_2}]\frac{d\alpha}{\alpha},
\end{split}
\end{equation}
where $m_1(f,g,\dots g)=d_gm_1\cdot f$ and similarly for $m_j$, and $L_{t_1,t_2}:=L_s+t_1g_1+t_2g_2$. Similarly
\begin{equation}\label{N22}
\begin{split}
\frac{d^2}{d\theta^2}N_{g_1,g_2-g_1}(1,\theta)&=\int_{\alpha=0}^\infty m_2(g_2-g_1,g_2-g_1,L_\theta;\alpha,y)\cdot G(L_\theta,\fint_\alpha L_\theta,\fint_{-\alpha}L_\theta)\frac{d\alpha}{\alpha}\\
&\quad+2\int_{\alpha=0}^\infty m_1(g_1-g_1,L_\theta;\alpha,y)\cdot {\bf v}_{g_2-g_1}\cdot\nabla G(L_\theta,\fint_\alpha L_\theta,\fint_{-\alpha}L_\theta)\frac{d\alpha}{\alpha}\\
&\quad+\int_{\alpha=0}^\infty m(L_\theta;\alpha,y)\cdot\nabla^2 G(L_\theta,\fint_\alpha L_\theta,\fint_{-\alpha}L_\theta)[{\bf v}_{g_2-g_1},{\bf v}_{g_2-g_1}]\frac{d\alpha}{\alpha}.
\end{split}
\end{equation}

\section{Reduction to a fixed point estimate}\label{fixedpoint}

\subsection{Analysis of the quasilinear structure}

We can extract the quasilinear part from \eqref{SSMuskat}. This will be defined in terms of two main terms. We define the function $F$ and the operator $W$ as follows
\begin{equation}\label{DefFV}
\begin{split}
F(t):=\left[1+t^2\right]^{-1},\qquad W[g](y):=\frac{1}{\pi}\int_{\mathbb{R}} \frac{(\fint_\alpha g(y))^2-g^2(y)}{1+(\fint_\alpha g(y))^2}\frac{d\alpha}{\alpha},
\end{split}
\end{equation}
and we obtain the following expression:
\begin{lemma}\label{LemNewEqSSP}

The self-similar profile $k$ satisfies
\begin{equation}\label{SSeq3}
\begin{split}
\vert\nabla\vert k-S\left[k+\frac{k^3}{3}\right]+W[k]\partial_yk&=R[k]+\mathcal{T}[k],
\end{split}
\end{equation}
where the semilinear terms are defined as
\begin{equation}\label{DefRT}
\begin{split}
R[h]&:=\frac{1}{\pi}\int_{\mathbb{R}}\partial_\alpha \left\{h(y-\alpha)\right\}\cdot \frac{(\fint_\alpha h)^2-h^2}{1+(\fint_\alpha h)^2}\frac{d\alpha}{\alpha},\\
\mathcal{T}[h]&:=(1+h^2)T[h];\qquad T[h]:=\frac{1}{\pi}\int_{\mathbb{R}}(\Delta_\alpha h)^2\cdot F^\prime(\fint_\alpha h)d\alpha.
\end{split}
\end{equation}

\end{lemma}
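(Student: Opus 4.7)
\medskip

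\noindent\textbf{Proof plan.} I would derive \eqref{SSeq3} from \eqref{SSMuskat} by a direct algebraic manipulation, using three ingredients.

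First, since $F'(t)=-2t(1+t^2)^{-2}$, the last integral in \eqref{SSMuskat} is exactly $T[k]$, so \eqref{SSMuskat} takes the simpler form
\begin{equation*}
0 = Sk + \frac{1}{\pi}\int_\mathbb{R} \frac{\Delta_\alpha\partial_y k}{1+(\fint_\alpha k)^2}\,d\alpha + T[k].
\end{equation*}
Next, to produce the combinations $(1+k^2)Sk = S[k+k^3/3]$ and $(1+k^2)T[k] = \mathcal{T}[k]$ that appear in \eqref{SSeq3}, I would multiply the previous display by $(1+k^2)$. The weight in the middle integral then decomposes via the elementary identity
\begin{equation*}
\frac{1+k^2}{1+(\fint_\alpha k)^2}=1-\frac{(\fint_\alpha k)^2-k^2}{1+(\fint_\alpha k)^2}=:1-P(\alpha,y).
\end{equation*}
The ``$1$'' contribution yields the $|\nabla|k$ term via the classical Fourier identity $\tfrac{1}{\pi}\int_\mathbb{R}\Delta_\alpha\partial_y k\,d\alpha = -|\nabla|k$, which is simply the linearization of \eqref{OMuskat} around $0$ and can be checked directly using $\mathrm{PV}\!\int_\mathbb{R}(1-e^{-i\alpha\xi})\alpha^{-1}\,d\alpha = i\pi\,\mathrm{sign}(\xi)$.

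For the remaining term $\tfrac{1}{\pi}\int \Delta_\alpha\partial_y k\cdot P\,d\alpha$, the key algebraic trick is to use $k'(y-\alpha) = -\partial_\alpha\{k(y-\alpha)\}$ to split
\begin{equation*}
\Delta_\alpha\partial_y k = \frac{k'(y)-k'(y-\alpha)}{\alpha}=\frac{k'(y)+\partial_\alpha\{k(y-\alpha)\}}{\alpha}.
\end{equation*}
Pulling $k'(y)$ out of the integral produces $W[k]\partial_y k$ exactly by the definition of $W$, while the remaining piece, $\tfrac{1}{\pi}\int \partial_\alpha\{k(y-\alpha)\}\cdot P\,d\alpha/\alpha$, is (up to the sign convention) precisely $R[k]$. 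Assembling these four contributions and rearranging produces \eqref{SSeq3}.

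The main subtlety to keep track of is the $1/\alpha$ weight carried by $W[k]$ and by the integrand of $R[k]$: a priori these are only principal value integrals, but the Taylor expansion $\fint_\alpha k - k = -\tfrac{\alpha}{2}k'(y)+O(\alpha^2)$ forces $P(\alpha,y)=O(\alpha)$ as $\alpha\to 0$, so that $P/\alpha$ is bounded near the origin and both $W[k]$ and the integrand of $R[k]$ are absolutely integrable. This cancellation is what makes the pointwise splitting rigorous and is the same structural feature that will later enable the quantitative estimates in Section \ref{quantitative}.
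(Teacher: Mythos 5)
Your argument is correct and is essentially the paper's own proof: the paper divides by $1+k^2$ and decomposes $F(\fint_\alpha k)=F(k)+\{F(\fint_\alpha k)-F(k)\}$, which is exactly your identity $\tfrac{1+k^2}{1+(\fint_\alpha k)^2}=1-P$ after clearing denominators, and then performs the same splitting of $\Delta_\alpha\partial_y k$ into the local piece (yielding the transport term with $W[k]$) and the shifted piece (yielding $R[k]$). (Both your computation and the paper's own expansion actually produce $-R[k]$ rather than $+R[k]$ on the right-hand side of \eqref{SSeq3}; this shared, harmless sign is immaterial for the subsequent analysis, which only uses size estimates on $R$.)
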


\begin{proof}[Proof of Lemma \ref{LemNewEqSSP}]

Dividing by $1+k^2$, it suffices to show that \eqref{SSMuskat} can be rewritten as
\begin{equation}\label{SSeq2}
\begin{split}
\left[F(k)\vert\nabla\vert-S\right]k+V[k]\partial_xk&=F(k)R[k]+T[k],\\
V[g](y)&:=\frac{1}{\pi}\int_{\mathbb{R}} \left\{F(g(y))-F(\fint_\alpha g(y))\right\}\frac{d\alpha}{\alpha}.
\end{split}
\end{equation}
The only nontrivial part is the decomposition of the second term in \eqref{SSMuskat}. We can expand
\begin{equation*}
\begin{split}
\frac{1}{\pi}\int_{\mathbb{R}}\partial_y(\Delta_\alpha g)\cdot F(\fint_\alpha g)d\alpha&=-F(g)\cdot\vert\nabla\vert g+\frac{1}{\pi}\int_{\mathbb{R}}\partial_y(\Delta_\alpha g)\cdot \left\{F(\fint_\alpha g)-F(g)\right\}d\alpha\\
&=-F(g)\cdot\vert\nabla\vert g+\partial_yg(y)\frac{1}{\pi}\int_{\mathbb{R}} \left\{F(\fint_\alpha g)-F(g)\right\}\frac{d\alpha}{\alpha}\\
&\quad-\frac{1}{\pi}\int_{\mathbb{R}}\partial_y g(y-\alpha)\cdot \left\{F(\fint_\alpha g)-F(g)\right\}\frac{d\alpha}{\alpha},
\end{split}
\end{equation*}
and rearranging the terms, we arrive at \eqref{SSeq2}.

\end{proof}

\subsection{Study of the linear equation: Duhamel formula}

Given a constant $\kappa>0$, we now consider the linear adjusted equation from \eqref{SSeq2}
\begin{equation}\label{LinAdEq}
\begin{split}
\left(\kappa\vert\nabla\vert-S\right)k&=p_1+\partial_yp_2
\end{split}
\end{equation}
for an odd function $k$. Taking the Fourier transform and using Duhamel's formula, we obtain the ODE
\begin{equation*}
\begin{split}
\partial_\xi\left\{\xi e^{\kappa\vert\xi\vert}\widehat{k}\right\}&=e^{\kappa\vert\xi\vert}\widehat{p}_1+i\xi e^{\kappa\vert\xi\vert}\widehat{p}_2.
\end{split}
\end{equation*}
If we assume that $\xi \widehat{k}(\xi)$ is continuous at the origin, and we integrate from $0$, we find that
\begin{equation}\label{SolDuhamel}
\begin{split}
\widehat{k}(\xi)&=C\frac{e^{-\kappa\vert\xi\vert}}{\xi}+{\bf T}_{-1}[\widehat{p}_1]+{\bf T}_0[\widehat{p}_2],
\end{split}
\end{equation}
for some constant $C$, where the linear operators are given by
\begin{equation}\label{SolvingOp1}
\begin{split}
{\bf T}_{-1}:g&\mapsto \frac{1}{\xi}\int_{\eta=0}^\xi e^{\kappa(\vert\eta\vert-\vert\xi\vert)}g(\eta)d\eta,\qquad
{\bf T}_0:g\mapsto \frac{i}{\xi}\int_{\eta=0}^\xi \eta e^{\kappa(\vert\eta\vert-\vert\xi\vert)}g(\eta)d\eta.
\end{split}
\end{equation}
In particular, under our assumptions, the odd solutions to the free equation $\left(\kappa\vert\nabla\vert-S\right)k=0$ with the condition $\lim_{y\to+\infty}k(y)=s$ are given by
\begin{equation}\label{FreeSolutions1}
\begin{split}
L_{s,\kappa}(y)&:=\frac{2s}{\pi}\arctan(y/\kappa).
\end{split}
\end{equation}
From now on, we shall restrict ourselves to the study of odd solutions.

\subsubsection{Operator estimates}

It remains to estimate the solution operators from \eqref{SolvingOp1}.

\begin{lemma}\label{TOperators}

The operators
\begin{equation*}
\begin{split}
{\bf T}_{-1}:g&\mapsto \frac{1}{\xi}\int_{\eta=0}^\xi e^{\kappa(\eta-\xi)}g(\eta)d\eta,\qquad
{\bf T}_0:g\mapsto \frac{i}{\xi}\int_{\eta=0}^\xi \eta e^{\kappa(\eta-\xi)}g(\eta)d\eta,\\
\end{split}
\end{equation*}
defined for functions on $L^2_{loc}(0,\infty)$ satisfy the boundedness properties
\begin{equation*}
\begin{split}
\Vert \xi {\bf T}_{-1}\Vert_{L^2\to L^2}+\Vert {\bf T}_{-1}\Vert_{L^3\to L^2}&\lesssim 1,\\
\Vert {\bf T}_0\Vert_{L^2\to L^2}+\Vert {\bf T}_0\Vert_{L^2(\xi^2 d\xi)\to L^2(\xi^2 d\xi)}&\lesssim 1.\\
\end{split}
\end{equation*}

\end{lemma}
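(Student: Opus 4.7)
The plan is to recognize each of the four estimates as a direct application of Young's convolution inequality with the one-sided exponential kernel $K_\kappa(\xi):=e^{-\kappa\xi}\mathbf{1}_{\{\xi\ge 0\}}$, with the single exception of the $L^3\to L^2$ bound, which requires a short scale-splitting argument. Throughout I would treat $\kappa>0$ as fixed; the rescaling $\eta\mapsto\kappa^{-1}\eta$, $\xi\mapsto\kappa^{-1}\xi$ reduces every bound to the case $\kappa=1$ up to a harmless power of $\kappa$, and in the intended application $\kappa$ will stay uniformly bounded away from $0$ and $\infty$.

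The three Young-type bounds each hinge on unmasking a hidden convolution. For $\Vert\xi{\bf T}_{-1}\Vert_{L^2\to L^2}$, I would rewrite $\xi{\bf T}_{-1}[g](\xi)=(K_\kappa\ast g\mathbf{1}_{\{\eta\ge 0\}})(\xi)$ on $(0,\infty)$, so that $L^1\ast L^2\to L^2$ gives the bound with constant $\Vert K_\kappa\Vert_{L^1}=\kappa^{-1}$. For $\Vert{\bf T}_0\Vert_{L^2\to L^2}$, the key observation is that $0\le\eta\le\xi$ implies $\eta/\xi\le 1$, hence $|{\bf T}_0[g](\xi)|\le (K_\kappa\ast|g|\mathbf{1}_{\{\eta\ge 0\}})(\xi)$, and the same Young estimate closes the argument. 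For the weighted bound $\Vert{\bf T}_0\Vert_{L^2(\xi^2 d\xi)\to L^2(\xi^2 d\xi)}$, the substitution $h(\eta):=\eta g(\eta)$ turns $\xi{\bf T}_0[g](\xi)$ into $i(K_\kappa\ast h\mathbf{1}_{\{\eta\ge 0\}})(\xi)$; Young in $L^2$ yields $\Vert\xi{\bf T}_0[g]\Vert_{L^2}\lesssim\Vert h\Vert_{L^2}=\Vert g\Vert_{L^2(\xi^2 d\xi)}$, which is exactly the desired inequality.

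The one estimate requiring a bit more care is $\Vert{\bf T}_{-1}\Vert_{L^3\to L^2}\lesssim 1$, since no $L^r$ kernel realizes $L^r\ast L^3\to L^2$. My plan is to derive two pointwise bounds and split the range of $\xi$. For $\xi\le 1$, I would bound the exponential above by $1$ and apply Hölder on $[0,\xi]$ with exponents $(3,3/2)$, producing $|{\bf T}_{-1}[g](\xi)|\lesssim \xi^{-1/3}\Vert g\Vert_{L^3}$. For $\xi\ge 1$, I would apply Hölder to the full integrand with the same exponent pair, using that $\Vert K_\kappa\mathbf{1}_{[0,\xi]}\Vert_{L^{3/2}}\lesssim 1$ uniformly, to obtain $|{\bf T}_{-1}[g](\xi)|\lesssim \xi^{-1}\Vert g\Vert_{L^3}$. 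Since $\int_0^1\xi^{-2/3}d\xi$ and $\int_1^\infty\xi^{-2}d\xi$ are both finite, squaring and integrating in $\xi$ yields the claim. No serious obstacle arises: all four estimates reduce to a one-line application of Young's inequality or a mild Hölder split once the underlying convolution structure is identified.
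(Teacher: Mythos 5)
Your proposal is correct and follows essentially the same route as the paper: the three $L^2$-type bounds come from the convolution structure of the kernel $\mathfrak{1}_{\{0\le\eta\le\xi\}}e^{\kappa(\eta-\xi)}$ (Young's inequality here is exactly Schur's test as the paper applies it, and your substitution $h(\eta)=\eta g(\eta)$ reproduces the paper's kernel $K'(\xi,\eta)=(\eta/\xi)K(\xi,\eta)$ trick), while the $L^3\to L^2$ bound comes from Hölder giving the pointwise decay $\min\{\xi^{-1/3},\xi^{-1}\}$, which you obtain by splitting $\xi\lessgtr 1$ and the paper obtains in a single inequality.
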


\begin{proof}[Proof of Lemma \ref{TOperators}]

To control ${\bf T}_{-1}$, we first use H\"older's inequality to bound
\begin{equation*}
\begin{split}
\vert {\bf T}_{-1}[g](\xi)\vert&\le \xi^{-\frac{1}{p}}\left(\frac{1-e^{-\kappa\xi}}{\kappa\xi}\right)^\frac{1}{p^\prime}\left(\int_{\eta=0}^\xi e^{\kappa(\eta-\xi)}\vert g(\eta)\vert^pd\eta\right)^\frac{1}{p}\lesssim\Vert g\Vert_{L^p}\cdot\min\{\xi^{-\frac{1}{p}},\xi^{-1}\},
\end{split}
\end{equation*}
which shows the second bound. For the first, we compute by duality that
\begin{equation*}
\begin{split}
\langle \xi {\bf T}_{-1}[g],h\rangle&=\iint K(\xi,\eta)\overline{g}(\eta)h(\xi)d\xi d\eta,\qquad K(\xi,\eta):= \mathfrak{1}_{\{0\le\eta\le\xi\}}e^{\kappa(\eta-\xi)},
\end{split}
\end{equation*}
and Schur's test allows to conclude. This also gives the second estimate on ${\bf T}_0$, and the first one follows a similar proof with kernel $K^\prime(\xi,\eta)=(\eta/\xi)K(\xi,\eta)$.

\end{proof}

\subsection{Fixed point formulation}

\subsubsection{Linearization at self-similar profile}

The key observation we will use is that for the solutions of the linearized problem \eqref{FreeSolutions1}, the quasilinear part simplifies significantly since $L_{s,\kappa}^2$ is almost constant away from a neighborhood of the origin:
\begin{equation*}
\begin{split}
L_{s,\kappa}^2(y)=s^2+p(y),\qquad p(y)=O(\langle \kappa/y\rangle^{-1}),\qquad L_{s,\kappa}^3=s^2L_{s,\kappa}+(L_{s,\kappa}^2-s^2)L_{s,\kappa}.
\end{split}
\end{equation*}
We can now linearize \eqref{SSeq3} at the constant $s$ to get an equation of the form \eqref{LinAdEq} with
\begin{equation}\label{DispRel}
\kappa:=\left[1+s^2/3\right]^{-1}.
\end{equation}
More precisely, we obtain
\begin{equation*}
\begin{split}
\vert\nabla\vert k-\left[1+s^2/3\right]Sk=\frac{1}{3}S\left[(k^2-s^2)k\right]-W[k]\partial_yk+R[k]+\mathcal{T}[k].
\end{split}
\end{equation*}
which we prefer to rewrite via a normal form as
\begin{equation}\label{SSeq3Pert1}
\begin{split}
\left[\kappa\vert\nabla\vert -S\right]h&=\frac{\kappa^2}{3}\vert\nabla\vert \left[(k^2-s^2)k\right]-\kappa W[k]\partial_yk+\kappa R[k]+\kappa \mathcal{T}[k],\\
h&:=k+\frac{\kappa}{3}(k^2-s^2)k.
\end{split}
\end{equation}

\subsubsection{Fixed point formulation}

We can now seek solutions of \eqref{SSeq3Pert1} as perturbations of \eqref{FreeSolutions1} when $s,\kappa$ are related as in \eqref{DispRel}. We seek solutions of the form
\begin{equation}\label{DefLs}
\begin{split}
k(y)=L_s(y)+g(y),\qquad L_s(y):=\frac{2s}{\pi}\arctan(y/\kappa)=s\frac{2}{\pi}\arctan((1+s^2/3)y).
\end{split}
\end{equation}
We note for later use that $L_s$ is a smooth function of $s$ and
\begin{equation}\label{DerLs}
\begin{split}
\partial_sL_s&=\frac{2}{\pi}\arctan(y/\kappa)+\frac{4}{3\pi}\frac{s^2y}{1+(y/\kappa)^2},\\
\partial_s(L_s^2-s^2)&=2s\left(s^{-2}(L_s^2-s^2)+\frac{2}{\pi}\arctan(y/\kappa)\frac{4}{3\pi}\frac{s^2y}{1+(y/\kappa)^2}\right).
\end{split}
\end{equation}
We define $\pi_s$ by
\begin{equation*}
\pi_s:=(L_s^2-s^2)L_s.    
\end{equation*}
Then, plugging \eqref{DefLs} into \eqref{SSeq3Pert1}, we obtain
\begin{equation*}
\begin{split}
h&=L_s+(\kappa/3)\pi_s+(1+\kappa(L_s^2-s^2/3))g+\kappa L_sg^2+(\kappa/3) g^3,\\
\left[\kappa\vert\nabla\vert -S\right]h&=\kappa^2\vert\nabla\vert\left[(1/3)\pi_s+(L_s^2-s^2/3)g+L_sg^2+g^3/3\right]\\
&\quad-\kappa W[L_s]\partial_yL_s-\kappa\left(W[L_s+g]-W[L_s]\right)\partial_y\left(L_s+g\right)-\kappa \partial_y\left(W[L_s]g\right)+\kappa g\partial_yW[L_s]\\
&\quad+\kappa (R[L_s]+R_1[g]+ R_{\ge2}[g]+\mathcal{T}[L_s]+\mathcal{T}_1[g]+\mathcal{T}_{\ge2}[g]).
\end{split}
\end{equation*}
Thus, using \eqref{SolDuhamel} and the notation \eqref{That}, we obtain two equations for $h$ (after identifying the element in the kernel for the second equation from the limit at $\infty$):
\begin{equation*}
\begin{split}
h&=L_s+(\kappa/3)\pi_s+\left\{3(1+s^2)/(3+s^2)+\kappa(L_s^2-s^2)\right\}g+\kappa L_sg^2+(\kappa/3)g^3\\
&=L_s+\kappa \widehat{{\bf T}}_{-1}\left\{(\kappa/3)\vert\nabla\vert \pi_s-W[L_s]\partial_yL_s+R[L_s]+\mathcal{T}[L_s]\right\}\\
&\quad+\kappa \widehat{{\bf T}}_{-1}\{\kappa\vert\nabla\vert (L_s^2-s^2/3)g-\partial_y\left(W[L_s]g\right)+g\partial_yW[L_s]-W_1[g]\p_yL_s+R_1[g]+\mathcal{T}_1[g]\\
&\quad+\kappa \widehat{{\bf T}}_{-1}\{\kappa\vert\nabla\vert(L_sg^2+g^3/3)-W_{\ge2}[g]\p_yL_s- W^{hi}_{\ge1}[g]\partial_yg-\partial_y(gW^{lo}_{\ge1}[g])+g\partial_yW^{lo}[g]\\
&\quad\, +R_{\ge2}[g]+\mathcal{T}_{\ge2}[g]\},
\end{split} 
\end{equation*}
where $R_1,\mathcal{T}_1$, $R_{\ge2}$, $\mathcal{T}_{\ge2}$ follow the convention in \eqref{ConventionLinMult} and $W=W^{lo}+W^{hi}$ is a decomposition later introduced in \eqref{SymDecW}. Combining the two equations for $h$, we arrive at the fixed-point formulation:
\begin{equation}\label{SSFixPointEquation}
\begin{split}
\Pi&=Ag+\mathcal{N}(g),\\
\end{split}
\end{equation}
with forcing term
\begin{equation}\label{SSPi}
\begin{split}
\Pi&:=(\kappa/3)\pi_s-(\kappa^2/3)\widehat{{\bf T}}_{-1}[\vert\nabla\vert\pi_s]+\kappa \widehat{{\bf T}}_{-1}\left[W[L_s]\partial_yL_s-R[L_s]-\mathcal{T}[L_s]\right],\\
\end{split}
\end{equation}
linear operator
\begin{equation}\label{SSLinearOp}
\begin{split}
Ag&:=-(1+\frac{2s^2}{3+s^2}+\kappa (L_s^2-s^2))g+\kappa^2 \widehat{{\bf T}}_{-1}\left[\vert\nabla\vert (L_s^2-s^2/3)g\right]\\
&\quad+\kappa \widehat{{\bf T}}_{-1}\left[g\partial_yW[L_s]-W_1[g]\partial_yL_s+R_1[g]\right]-\kappa \widehat{{\bf T}}_0[W[L_s]g]+\kappa \widehat{{\bf T}}_{-1}[\mathcal{T}_1[g]],
\end{split}
\end{equation}
and nonlinearity
\begin{equation}\label{SSNonlinearity}
\begin{split}
\mathcal{N}(f)&:=-\kappa (L_s+g/3)g^2+\kappa^2 \widehat{{\bf T}}_{-1}\left[\vert\nabla\vert(L_sg^2+g^3/3)\right]\\
&\quad\,\,+\kappa \widehat{{\bf T}}_{-1}\left[-W_{\ge2}[g]\p_yL_s- W^{hi}_{\ge1}[g]\partial_yg-g\partial_yW^{lo}_{\ge1}[g]\right]-\kappa\widehat{{\bf T}}_0[gW^{lo}_{\ge1}[g]]\\
&\quad\,\,+\kappa \widehat{{\bf T}}_{-1}\left[ R_{\ge2}[g]\right]+\kappa \widehat{{\bf T}}_{-1}\left[\mathcal{T}_{\ge2}[g]\right],
\end{split}
\end{equation}
where we used that $\widehat{{\bf T}}_{-1}[\partial_yf]=\widehat{{\bf T}}_0[f]$. In view of this, Theorem \ref{SSTheorem} follows from the following existence result.

\begin{proposition}\label{PropExistenceSS}

There exists $s_\ast$ such that for all $0<s<s_\ast$, there exists exactly one solution ${\bf g}_s$ of \eqref{SSFixPointEquation} with \eqref{SSPi}-\eqref{SSNonlinearity} in a ball in $H^1$, $B_{H^1}(0,s_\ast)$. In addition the mapping $s\mapsto {\bf g}_s$ is $C^1$ in $H^1$ and
\begin{equation}\label{SmoothnessGs}
\begin{split}
\Vert\partial_s{\bf g}_s\Vert_{H^1}\lesssim s^2.
\end{split}
\end{equation}

\end{proposition}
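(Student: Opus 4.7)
The plan is to solve \eqref{SSFixPointEquation} by the Banach contraction principle in a small ball of $H^1(\mathbb{R})$ and then upgrade to $C^1$-dependence on $s$ via the implicit function theorem. The key structural observation is that at $s=0$ one has $L_s\equiv 0$, $\kappa=1$, and hence $Ag=-g$, $\Pi=0$ and $\mathcal{N}(g)=O(\Vert g\Vert_{H^1}^2)$. Writing \eqref{SSFixPointEquation} in the equivalent form
\begin{equation*}
g=\mathcal{F}_s(g):=\Pi-(A+\mathrm{Id})g-\mathcal{N}(g),
\end{equation*}
we view $\Pi$ as a forcing of size $s^3$, $A+\mathrm{Id}$ as a small linear perturbation (vanishing at $s=0$), and $\mathcal{N}$ as a genuinely nonlinear remainder starting at order two in $g$.

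The crux of the proof, which I expect will occupy Section \ref{quantitative}, consists of three uniform estimates on $H^1$: \textit{(i)} $\Vert\Pi\Vert_{H^1}\lesssim s^3$ and $\Vert\p_s\Pi\Vert_{H^1}\lesssim s^2$, coming from the explicit third-order vanishing of $\pi_s=(L_s^2-s^2)L_s$, from the cubic nature in $L_s$ of $W[L_s]\p_yL_s$, $R[L_s]$ and $\mathcal{T}[L_s]$, and from boundedness of $\widehat{\bf T}_{-1}$ on $H^1$ (which follows from Lemma \ref{TOperators} together with 1D Sobolev embedding and Hausdorff--Young to match the $L^3\to L^2$ clause); \textit{(ii)} $\Vert(A+\mathrm{Id})g\Vert_{H^1}=o(1)\cdot\Vert g\Vert_{H^1}$ as $s\to 0$, since each constituent of $A+\mathrm{Id}$ carries an explicit factor that vanishes at $s=0$, namely $\tfrac{2s^2}{3+s^2}+\kappa(L_s^2-s^2)$, $L_s^2-s^2/3$, $W[L_s]$, $\p_yW[L_s]$, $W_1[g]\p_yL_s$, $R_1[g]$ or $\mathcal{T}_1[g]$; \textit{(iii)} the locally Lipschitz estimate $\Vert\mathcal{N}(g_1)-\mathcal{N}(g_2)\Vert_{H^1}\lesssim \bigl(s(\Vert g_1\Vert_{H^1}+\Vert g_2\Vert_{H^1})+\Vert g_1\Vert^2_{H^1}+\Vert g_2\Vert^2_{H^1}\bigr)\Vert g_1-g_2\Vert_{H^1}$, reflecting that $\mathcal{N}$ starts at order $g^2$ with coefficients of size $O(s)$ (from $L_s$) or $O(\Vert g\Vert)$. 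The main obstacle here is that $L_s\in L^\infty\setminus\dot H^{1/2}$, so naive power counting in $H^1$ for the nonlocal singular integrals $W,R,\mathcal{T}$ is borderline; the required precision should come from the symmetrization and ``elementary brick'' cancellations announced in the outline.

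Granted \textit{(i)}--\textit{(iii)}, we choose $s_\ast>0$ so small that for every $0<s<s_\ast$ the map $\mathcal{F}_s$ has Lipschitz constant at most $\tfrac12$ on $B_{H^1}(0,s_\ast)$ and stabilizes this ball. Banach's theorem then gives a unique solution ${\bf g}_s\in B_{H^1}(0,s_\ast)$, and from $\mathcal{F}_s(0)=\Pi$ together with the Lipschitz bound we deduce $\Vert{\bf g}_s\Vert_{H^1}\le 2\Vert\Pi\Vert_{H^1}\lesssim s^3$. For the $C^1$-dependence, the map $G(s,g):=g-\mathcal{F}_s(g)$ is $C^1$ (in fact analytic) from $(-s_\ast,s_\ast)\times H^1$ to $H^1$ and its Fréchet derivative in $g$, namely $\mathrm{Id}+(A+\mathrm{Id})+D\mathcal{N}(g)$, is a small perturbation of $\mathrm{Id}$ on $B_{H^1}(0,s_\ast)$ and hence invertible; the implicit function theorem then produces the $C^1$ branch $s\mapsto {\bf g}_s$. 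Differentiating ${\bf g}_s=\mathcal{F}_s({\bf g}_s)$ in $s$ and inverting this operator yields
\begin{equation*}
\Vert\p_s{\bf g}_s\Vert_{H^1}\lesssim \Vert\p_s\Pi\Vert_{H^1}+\Vert(\p_sA){\bf g}_s\Vert_{H^1}+\Vert(\p_s\mathcal{N})({\bf g}_s)\Vert_{H^1}\lesssim s^2+s^3+s^6\lesssim s^2,
\end{equation*}
which is exactly \eqref{SmoothnessGs}.
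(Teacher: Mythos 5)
Your proposal is correct and follows essentially the same route as the paper: the three estimates \textit{(i)}--\textit{(iii)} you isolate are exactly Lemmas \ref{SSLemPi}, \ref{SSLemA} and \ref{SSLemN}, and the paper likewise runs a Banach fixed point for $\Phi(g)=A^{-1}\left[\Pi-\mathcal{N}(g)\right]$ on a small $H^1$ ball and then obtains the $C^1$ dependence and \eqref{SmoothnessGs} by differentiating the fixed-point identity. (Only a cosmetic sign slip: since $A\approx-\mathrm{Id}$, the rearranged equation should read $g=-\Pi+(A+\mathrm{Id})g+\mathcal{N}(g)$, which changes nothing in the contraction argument.)
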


This proposition will be an easy consequence of the following quantitative estimates proved in the next section.

\begin{lemma}\label{SSLemPi}

There holds that
\begin{equation*}
\begin{split}
\Vert \Pi\Vert_{H^1}&\lesssim s^3,\qquad\Vert \partial_s\Pi\Vert_{H^1}\lesssim s^2.
\end{split}
\end{equation*}

\end{lemma}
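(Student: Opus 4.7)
The plan is to split $\Pi = \Pi_1 + \Pi_2 + \Pi_3$ along the three terms in \eqref{SSPi}, and estimate each contribution separately. The common theme is that every contribution carries three powers of $s$: for $\Pi_1$, $\Pi_2$ this is explicit in the cubic structure $\pi_s=(L_s^2-s^2)L_s$, while for $\Pi_3$ the cubic scaling comes from $W[L_s]$ being quadratic in $L_s$, $R[L_s]$ containing the quadratic factor $(\fint_\alpha L_s)^2-L_s^2$, and $T[L_s]$ exploiting the cancellation $F^\prime(0)=0$ so that $F^\prime(\fint_\alpha L_s) = O(\fint_\alpha L_s) = O(s)$.

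First I would estimate $\pi_s$ via its explicit form: writing $\pi_s = s^3 q(y/\kappa)$ with $q(z) = \bigl[(\tfrac{2}{\pi}\arctan z)^2-1\bigr]\tfrac{2}{\pi}\arctan z$, which is odd, smooth, and decays as $1/|z|$ at infinity, a direct calculation gives $\|\pi_s\|_{H^1}+\||\nabla|\pi_s\|_{L^2}+\|\partial_y\pi_s\|_{L^{3/2}}\lesssim s^3$. This bounds $\Pi_1$ immediately. For $\Pi_2$, Plancherel and the bound $\|\xi\,{\bf T}_{-1}\|_{L^2\to L^2}\lesssim 1$ from Lemma~\ref{TOperators} yield $\||\nabla|\Pi_2\|_{L^2}\lesssim \||\xi|\widehat{\pi_s}\|_{L^2}\lesssim s^3$; for $\|\Pi_2\|_{L^2}$, Hausdorff--Young gives $\||\xi|\widehat{\pi_s}\|_{L^3}=\|\widehat{\partial_y\pi_s}\|_{L^3}\lesssim\|\partial_y\pi_s\|_{L^{3/2}}\lesssim s^3$, and $\|{\bf T}_{-1}\|_{L^3\to L^2}\lesssim 1$ concludes.

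For $\Pi_3$, I would bound each of the semilinear inputs $W[L_s]\partial_y L_s$, $R[L_s]$, $\mathcal{T}[L_s]$ both in $L^2_y$ and (on the Fourier side) in $L^3_\xi$, obtaining $O(s^3)$ in each case. The pointwise bounds use $|L_s|+|\fint_\alpha L_s|\lesssim s$, $|\Delta_\alpha L_s|\lesssim s\min(1,|\alpha|^{-1})$, together with the $1/|y|$-decay of $L_s^2-s^2$ and the smoothness of $\partial_y L_s$; the $\alpha$-integrals are absolutely convergent after the cubic factor is extracted. Applying Lemma~\ref{TOperators} exactly as for $\Pi_2$ gives $\|\Pi_3\|_{H^1}\lesssim s^3$. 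The $\partial_s$-bound follows by differentiating the explicit expressions using \eqref{DerLs}: writing $L_s = s M_s$ with $M_s$ bounded and $\partial_s M_s = O(s)$, a derivative $\partial_s$ on a cubic quantity drops the order by one power of $s$, so all estimates transfer with $s^3$ replaced by $s^2$.

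The main technical obstacle I expect is the careful $\alpha$-analysis of $R[L_s]$ and $\mathcal{T}[L_s]$: one must split into regimes $|\alpha|\ll 1$ (using second-order Taylor expansions of $\Delta_\alpha L_s$ and $\fint_\alpha L_s$ against smooth $L_s$) and $|\alpha|\gg 1$ (using boundedness of $L_s$ together with the $1/\alpha$ weight), then combine to produce bounds in $L^2_y$ and $L^3_\xi$ that are uniform in $s$. In particular, extracting the $F^\prime(0)=0$ cancellation from $\mathcal{T}[L_s]$ uniformly in $\alpha$, so that one really gets three (and not two) powers of $s$, is the delicate quantitative point.
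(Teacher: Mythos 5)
Your proposal is correct and follows essentially the same route as the paper: an explicit computation showing $\pi_s=s^3q((1+s^2/3)y)$ with $q$ smooth, odd and decaying like $\langle y\rangle^{-1-a}$ after $a$ derivatives, combined with the cubic smallness of $W[L_s]\partial_yL_s$, $R[L_s]$ and $\mathcal{T}[L_s]$ (the content of \eqref{WLs}, \eqref{RLs}, \eqref{BoundsTLs}) and the mapping properties of ${\bf T}_{-1}$ from Lemma \ref{TOperators} via Hausdorff--Young. The only cosmetic difference is that you phrase the low-frequency input as an $L^3_\xi$ bound where the paper records $L^{4/3}\cap L^2$ (equivalently $L^{3/2}$) bounds on the physical side, and you re-derive the semilinear estimates at $L_s$ rather than citing the lemmas that contain them.
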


\begin{lemma}\label{SSLemA}

There holds that
\begin{equation*}
\Vert A+Id\Vert_{H^1\to H^1}\lesssim s^2,\qquad\Vert \partial_sA\Vert_{H^1\to H^1}\lesssim s.
\end{equation*}
In particular, there exists $s_1>0$ such that $A$ is invertible in $H^1$ for all $0\le s\le s_1$.
\end{lemma}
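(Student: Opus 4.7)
The plan is to decompose $A+\text{Id}$ into ``elementary blocks'' corresponding to each term in \eqref{SSLinearOp}, and to bound each in $H^1\to H^1$ operator norm. Writing out $Ag+g$ explicitly,
\begin{align*}
(A+\text{Id})g&=-\left[\tfrac{2s^2}{3+s^2}+\kappa(L_s^2-s^2)\right]g+\kappa^2\widehat{{\bf T}}_{-1}\!\left[|\nabla|(L_s^2-s^2/3)g\right]\\
&\quad+\kappa\widehat{{\bf T}}_{-1}\!\left[g\,\partial_yW[L_s]-W_1[g]\partial_yL_s+R_1[g]+\mathcal{T}_1[g]\right]-\kappa\widehat{{\bf T}}_0[W[L_s]g].
\end{align*}
Since $L_s=(2s/\pi)\arctan(y/\kappa)$ obeys $\|L_s\|_{L^\infty}\le s$ and $\|\partial_yL_s\|_{L^\infty}\lesssim s$, we have $\|L_s^2-s^2\|_{W^{1,\infty}}+\|L_s^2-s^2/3\|_{W^{1,\infty}}\lesssim s^2$; combined with the obvious bound on $\tfrac{2s^2}{3+s^2}$, multiplication by the first bracket is thus $O(s^2)$ on $H^1$.

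For the smoothing-operator pieces, Lemma~\ref{TOperators} translates, via the conjugation \eqref{That}, into
\[
\||\nabla|\widehat{{\bf T}}_{-1}F\|_{L^2}+\|\widehat{{\bf T}}_0 F\|_{L^2}\lesssim\|F\|_{L^2},\qquad\|\widehat{{\bf T}}_{-1}F\|_{L^2}\lesssim\|F\|_{L^3}.
\]
A short Fourier-side calculation, expanding $|\eta|=\mathrm{sgn}(\eta)\eta$, yields the identity $\widehat{{\bf T}}_{-1}[|\nabla|F]=\widehat{{\bf T}}_0[\mathcal{H}F]$ and hence $\|\widehat{{\bf T}}_{-1}[|\nabla|\,\cdot\,]\|_{H^1\to H^1}\lesssim 1$; combined with the multiplier bound above, this dispatches the $\kappa^2\widehat{{\bf T}}_{-1}[|\nabla|(L_s^2-s^2/3)g]$ term. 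The remaining $\widehat{{\bf T}}$-sandwiched nonlinear terms are handled using the ``elementary brick'' bounds of Section~\ref{quantitative}: because $L_s$ and $\partial_yL_s$ are $O(s)$ uniformly with $L_s^2-s^2$ decaying at $\pm\infty$, one expects
\[
\|W[L_s]\|_{W^{1,\infty}}\lesssim s^2,\quad\|W_1[g]\|_{L^\infty}\lesssim s\|g\|_{H^1},\quad\|R_1[g]\|_{L^2}+\|\mathcal{T}_1[g]\|_{L^2}\lesssim s^2\|g\|_{H^1}.
\]
Feeding these into the preceding $\widehat{{\bf T}}_{-1}$, $\widehat{{\bf T}}_0$ bounds and summing yields $\|A+\text{Id}\|_{H^1\to H^1}\lesssim s^2$, so that $-A=\text{Id}-(A+\text{Id})$ is invertible on $H^1$ by Neumann series for any $s\le s_1$ sufficiently small.

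For the bound on $\partial_sA$, one differentiates each block in $s$ using \eqref{DerLs}, with $\partial_s\kappa=O(s)$ and $\|\partial_sL_s\|_{L^\infty}=O(1)$ (rather than $O(s)$). Every $O(s^2)$ factor above thereby loses exactly one power of $s$: for instance $\partial_s[\kappa(L_s^2-s^2)]=(\partial_s\kappa)(L_s^2-s^2)+\kappa[2L_s\partial_sL_s-2s]=O(s)$ in $W^{1,\infty}$, and the $\partial_s$-derivatives of $W[L_s],R[L_s],\mathcal{T}[L_s]$ and of the linearizations $W_1,R_1,\mathcal{T}_1$ are estimated by exactly the same ``brick'' scheme with one fewer power of $L_s$.

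The real obstacle is therefore not in this lemma itself but in the underlying quantitative estimates on $W,R,\mathcal{T}$ and their linearizations from Section~\ref{quantitative}. These require exploiting the cancellation $(\fint_\alpha L_s)^2-L_s^2=O(\alpha L_s\partial_yL_s)$ as $\alpha\to 0$ together with the integrability furnished by the $L^\infty$ decay of $L_s^2-s^2$ at $\pm\infty$, so that every factor of $L_s$ or $\partial_yL_s$ yields a genuine gain of $s$. Once these bricks are granted, Lemma~\ref{SSLemA} becomes an accounting exercise along the lines above.
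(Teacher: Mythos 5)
Your proposal is correct and follows essentially the same route as the paper: a term-by-term decomposition of $A+\mathrm{Id}$, bounds on each block via Lemma \ref{TOperators} combined with the estimates of Lemmas \ref{lemW}, \ref{LemR} and \ref{LemT}, inversion by Neumann series, and differentiation in $s$ exploiting $\partial_sL_s=s^{-1}L_s+O_{H^1}(s^2)$. One small point: to place $\widehat{{\bf T}}_{-1}[R_1[g]]$ and $\widehat{{\bf T}}_{-1}[\mathcal{T}_1[g]]$ in $H^1$ using only the operator bounds stated in Lemma \ref{TOperators} (namely $\Vert {\bf T}_{-1}\Vert_{L^3\to L^2}$ on the Fourier side), you need the $L^{4/3}\cap L^2$ control provided by Lemmas \ref{LemR}--\ref{LemT}, not merely the $L^2$ bounds you quote.
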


\begin{lemma}\label{SSLemN}

There holds that $\mathcal{N}(0)=0$ and whenever $\Vert g_1\Vert_{H^1}+\Vert g_2\Vert_{H^1}\le 1$, 
\begin{equation*}
\begin{split}
\Vert \mathcal{N}(g_1)-\mathcal{N}(g_2)\Vert_{H^1}&\lesssim \Vert g_1-g_2\Vert_{H^1}\cdot\left[s^2+\Vert g_1\Vert_{H^1}^2+\Vert g_2\Vert_{H^1}^2\right].
\end{split}
\end{equation*}
\end{lemma}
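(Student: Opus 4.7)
First, the claim $\mathcal{N}(0) = 0$ is immediate by inspection of \eqref{SSNonlinearity}: every summand contains either an explicit factor of $g$ or one of the operators $W_{\ge 2}$, $W^{hi}_{\ge 1}$, $W^{lo}_{\ge 1}$, $R_{\ge 2}$, $\mathcal{T}_{\ge 2}$, all of which vanish at $g=0$ by the convention \eqref{ConventionLinMult}.

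For the Lipschitz bound, my plan is to treat the six summands in \eqref{SSNonlinearity} separately and sum the resulting estimates. For the polynomial contributions $-\kappa(L_s+g/3)g^2$ and $\kappa^2\widehat{{\bf T}}_{-1}[\vert\nabla\vert(L_sg^2+g^3/3)]$, I would factor $g_1^2-g_2^2=(g_1+g_2)(g_1-g_2)$ and $g_1^3-g_2^3=(g_1^2+g_1g_2+g_2^2)(g_1-g_2)$, then invoke (i) the Banach-algebra property of $H^1(\mathbb{R})$, (ii) the bounds $\Vert L_s\Vert_{L^\infty}+\Vert \partial_yL_s\Vert_{L^2}\lesssim s$ that yield $\Vert L_s g\Vert_{H^1}\lesssim s\Vert g\Vert_{H^1}$, and (iii) the fact that $\widehat{{\bf T}}_{-1}\circ\vert\nabla\vert$ is bounded on $H^1$ via Lemma \ref{TOperators}, using the pair of bounds on $\Vert\xi{\bf T}_{-1}[\vert\eta\vert\widehat{p}]\Vert_{L^2}$ and $\Vert{\bf T}_{-1}[\vert\eta\vert\widehat{p}]\Vert_{L^2}$. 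The cross terms involving $L_s$ produce a factor $s\Vert g_i\Vert_{H^1}$, absorbed into the right-hand side via AM-GM as $s\Vert g_i\Vert_{H^1}\lesssim s^2+\Vert g_i\Vert_{H^1}^2$, while the purely cubic $g^3$ contribution delivers $\Vert g_i\Vert_{H^1}^2$ directly.

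For the singular-integral terms involving $W_{\ge 2}$, $W^{hi}_{\ge 1}$, $W^{lo}_{\ge 1}$, $R_{\ge 2}$ and $\mathcal{T}_{\ge 2}$, I would apply the identity \eqref{HOTTerms} with base $L_s$ and perturbations $g_1, g_2-g_1$, thereby writing each difference as
\begin{equation*}
\int_{0}^{1}\frac{d}{dt_1}\frac{d}{dt_2}N_{g_1,g_2-g_1}(t_1,0)\,dt_1+\int_{0}^{1}(1-\theta)\frac{d^2}{d\theta^2}N_{g_1,g_2-g_1}(1,\theta)\,d\theta.
\end{equation*}
Expanding the integrands via \eqref{N21}-\eqref{N22}, each contribution exhibits an explicit linear factor of $g_2-g_1$ (through the second-slot derivative) multiplied by at most two further factors drawn from $\{g_1,g_2,L_s\}$ and paired with $\nabla^jG$ evaluated on averaged profiles $L_\theta=L_s+g_1+\theta(g_2-g_1)$. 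Factors originating from $L_s$ contribute the $s^2$ portion of the bound, while factors from $g_i$ contribute $\Vert g_i\Vert_{H^1}^2$. The outer multiplications by $\partial_yL_s$ or $\partial_yg$ and the applications of $\widehat{{\bf T}}_{-1}$, $\widehat{{\bf T}}_0$ are then absorbed via Lemma \ref{TOperators} together with the Banach-algebra property of $H^1$.

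The main obstacle will be producing uniform $H^1$-bounds on the multilinear singular-integral functionals that appear in these expansions, uniformly over the base profile $L_\theta$ for $\Vert g_1\Vert_{H^1}+\Vert g_2\Vert_{H^1}\le 1$. These are exactly the ``elementary brick'' estimates to be developed in Section \ref{quantitative}: they use the symmetrization cancellations present in $(\fint_\alpha g)^2-g^2$ and in $\Delta_\alpha g\cdot\partial_yg$, and the low/high splitting $W=W^{lo}+W^{hi}$ of \eqref{SymDecW}, to control the $\alpha\to 0^+$ singularity, while the decay of the relevant convolution kernels controls the $\alpha\to\infty$ tail. Taking those estimates as a black box, Lemma \ref{SSLemN} follows by organized bookkeeping across the six summands of \eqref{SSNonlinearity}.
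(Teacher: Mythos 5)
Your approach is essentially the paper's own: decompose $\mathcal{N}$ into the polynomial terms and the singular-integral terms, treat the polynomial terms via the algebra property of $H^1$, the bound $\Vert L_s\Vert_{W^{1,\infty}}\lesssim s$, and the mapping properties of $\widehat{{\bf T}}_{-1}$/$\widehat{{\bf T}}_0$ from Lemma \ref{TOperators}, and treat the nonlinear operator terms by telescoping and citing the Lipschitz estimates on $W_{\ge2}$, $W_{\ge1}$, $R_{\ge2}$, $\mathcal{T}_{\ge2}$ (i.e.\ \eqref{W2} in Lemma \ref{lemW} and the last lines of \eqref{RLs}, \eqref{BoundsTLipsitz} in Lemmas \ref{LemR} and \ref{LemT}). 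The paper's proof is indeed exactly this bookkeeping, with the Lipschitz bounds you label as the ``main obstacle'' already packaged in those earlier lemmas rather than re-derived from \eqref{HOTTerms} inside this proof. Two very small caveats, neither affecting the outcome: the boundedness of $\widehat{{\bf T}}_{-1}\circ\vert\nabla\vert$ on $H^1$ is most cleanly obtained by writing $\vert\nabla\vert=H\partial_y$ and using the identity $\widehat{{\bf T}}_{-1}[\partial_yf]=\widehat{{\bf T}}_0[f]$ together with the $L^2(\xi^{2j}d\xi)$ bounds for $\widehat{{\bf T}}_0$, rather than applying ${\bf T}_{-1}$ directly to $\vert\eta\vert\widehat{p}$ as you sketch; and the AM-GM step $s\Vert g_i\Vert_{H^1}\lesssim s^2+\Vert g_i\Vert_{H^1}^2$ that you invoke for the polynomial cross term is also needed when combining the $(s+\Vert g_1\Vert+\Vert g_2\Vert)$ factor from \eqref{W2} with $\Vert\partial_yL_s\Vert_{L^1\cap L^2}\lesssim s$.
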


\begin{proof}[Proof of Proposition \ref{PropExistenceSS}]

We are now ready to prove Proposition \ref{PropExistenceSS} via a fixed point formulation. For $\varepsilon>0$, we consider $X:=\{g\in H^1:\Vert g\Vert_{H^1}\le\varepsilon\}$ and we want to show that \eqref{SSFixPointEquation} has a unique solution in $X$, provided that $0<s\le s_\ast$ is small enough. We define the mapping
\begin{equation*}
\begin{split}
\Phi:g\mapsto A^{-1}\left[\Pi-\mathcal{N}(g)\right],
\end{split}
\end{equation*}
which is well defined on $H^1$ since $A$ is invertible for $0< s\ll 1$ by Lemma \ref{SSLemA}. Using Lemma \ref{SSLemA} and Lemma \ref{SSLemN}, we see that $\Phi:X\to X$ provided $0<\varepsilon\le\varepsilon_\ast$ is small enough. Finally, decreasing $\varepsilon_\ast$ and using Lemma \ref{SSLemN} again, we see that $\Phi$ is a contraction on $X$. By the Banach fixed point theorem it has a unique fixed point in $X$.

In addition, we can study the smoothness of $s\mapsto {\bf g}_s$. Deriving \eqref{SSFixPointEquation}, we find that
\begin{equation*}
\begin{split}
(A+d_{g_s}\mathcal{N})[\partial_sg_s]=\partial_s\Pi_s-\left(\partial_sA\right)g_s,
\end{split}
\end{equation*}
and using Lemma \ref{SSLemPi}, Lemma \ref{SSLemA}, and Lemma \ref{SSLemN} again, we deduce that $s\mapsto {\bf g}_s$ is $C^1$ and we have \eqref{SmoothnessGs}.

\end{proof}

It remains to prove Lemma \ref{SSLemPi}, Lemma \ref{SSLemA} and Lemma \ref{SSLemN}. This will be done in the next section.

\section{Quantitative analysis}\label{quantitative}

\subsection{Analysis of the elementary bricks}\label{ElementaryBricks}

We need to control many terms. Fortunately, in many of them, the key cancellation is provided by simple ``elementary bricks'' which can be analyzed separately. We define the quadratic expressions for $\alpha\ge0$:
\begin{equation}\label{QuadraticBricks}
\begin{split}
\delta_\alpha[f](y)&:=(\fint_\alpha f(y))^2-(\fint_{-\alpha} f(y))^2,\qquad J_\alpha[h](y):=\left(\fint_\alpha h(y)\right)^2-2h^2(y)+\left(\fint_{-\alpha}h(y)\right)^2,\\
\end{split}
\end{equation}
and let $\delta_\alpha[f,g]$ and $J_\alpha[f,g]$ denote the bilinear expression obtained by polarization. We see that $\delta_\alpha[f]$ is odd if $f$ is odd, while $J_\alpha[f]$ is even if $f$ is either even or odd.

We start by estimating the key cancellation.

\begin{lemma}\label{ControlDelta2}

Given $\delta_\alpha[f,g]$ defined in \eqref{QuadraticBricks}, $y\ge0$ and $\alpha\ge 0$, there holds that
\begin{equation}\label{Delta2Ls}
\begin{split}
s^{-2}\vert\delta_\alpha[L_s](y)\vert&\lesssim \mathfrak{1}_{\{0\le\alpha\le y\}}\alpha\ln\langle\alpha\rangle\cdot \min\{y,y^{-2}\}+\mathfrak{1}_{\{0\le y\le\alpha\}}y\min\{\alpha,\alpha^{-1}\} ,\\
s^{-2}\vert\partial_y\delta_\alpha[L_s](y)\vert&\lesssim \mathfrak{1}_{\{0\le\alpha\le y\}}\alpha\ln\langle\alpha\rangle\cdot \langle y-\alpha\rangle^{-1}\langle y\rangle^{-2}+\mathfrak{1}_{\{0\le y\le\alpha\}}\min\{\alpha,\alpha^{-1} \},
\end{split}
\end{equation}
and
\begin{equation}\label{DeltaLsg}
\begin{split}
\vert \delta_\alpha[L_s,g]\vert&\lesssim s\Vert g\Vert_{H^1}\cdot \min\{\alpha^\frac{1}{2},\alpha^{-\frac{1}{2}}\},\\
\vert \delta_\alpha[g,h]\vert&\lesssim \Vert g\Vert_{H^1}\Vert h\Vert_{H^1}\min\{\alpha^\frac{1}{2},\alpha^{-1}\}.
\end{split}
\end{equation}
In addition,
\begin{equation}\label{Deltag1g2L^2}
\begin{split}
\vert \delta_\alpha[L_s,g]\vert&\lesssim s\int_{\{\vert t\vert \le\alpha\}}\left[\vert g^\prime(y+t)\vert+\langle \alpha\rangle^{-1}\vert g(y+t)\vert\right] dt,\\
\vert \delta_\alpha[g_1,g_2]\vert&\lesssim \Vert g_2\Vert_{H^1}\int_{\{\vert t\vert\le\alpha \}}\min\{\alpha^{-\frac{1}{2}},\alpha^{-\frac{3}{2}}\}\vert g_1(y+t)\vert dt,\\
&\quad+\Vert g_1\Vert_{H^1}\int_{\{\vert t\vert\le\alpha \}}\min\{\alpha^{-\frac{1}{2}},\alpha^{-\frac{3}{2}}\}\vert g_2(y+t)\vert dt
\end{split}
\end{equation}
and a derivative brings powers of $\alpha^{-1}$,
\begin{equation}\label{ddxDeltag1g2L^2}
\begin{split}
\vert\alpha\partial_y\delta_\alpha[L_s,g]\vert(y)&\lesssim s\Big(\vert g(y+\alpha)\vert+\vert g(y)\vert+\vert g(y-\alpha)\vert+\frac{1}{\alpha}\int_{t=-\alpha}^\alpha \vert g(y+t)\vert dt\Big).
\end{split}
\end{equation}

\end{lemma}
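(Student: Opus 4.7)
The proof rests on the bilinear polarization
\begin{equation*}
\delta_\alpha[f,g] = \tfrac{1}{2}\bigl[\fint_\alpha f - \fint_{-\alpha}f\bigr]\bigl[\fint_\alpha g + \fint_{-\alpha}g\bigr] + \tfrac{1}{2}\bigl[\fint_\alpha f + \fint_{-\alpha}f\bigr]\bigl[\fint_\alpha g - \fint_{-\alpha}g\bigr],
\end{equation*}
combined with the integral representations $(\fint_\alpha h \pm \fint_{-\alpha}h)(y) = \alpha^{-1}\int_0^\alpha [h(y-u)\pm h(y+u)]du$. The antisymmetric-in-$\alpha$ (difference) piece is where cancellation lives: it vanishes for constant $h$ and more generally inherits the smoothness of $h$. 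The symmetric-in-$\alpha$ (sum) piece is twice the local average and is bounded uniformly by $\Vert h\Vert_{L^\infty}$.

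For the $L_s$-only bounds \eqref{Delta2Ls}, the plan is to split into $\alpha\le y$ and $y\le\alpha$. When $\alpha\le y$, both averaging windows sit inside $(0,\infty)$ where $L_s$ is smooth and equals $s-(2s/\pi)\arctan(\kappa/z)$, so I would evaluate both the sum and the difference explicitly using the antiderivative $\int\arctan(\kappa/z)\,dz = z\arctan(\kappa/z)+(\kappa/2)\ln(z^2+\kappa^2)$. The logarithmic antiderivative is what produces the $\ln\langle\alpha\rangle$ factor; it becomes active precisely when $y$ is close to $\alpha$ and hence $y-\alpha$ is small compared to $\alpha$. When $y\le\alpha$, the averaging windows straddle the origin, so I would split at $0$ and exploit the oddness of $L_s$ to cancel the leading contribution of the average, leaving terms controlled by $y\,L_s(\alpha)\lesssim sy\min\{\alpha,\alpha^{-1}\}$. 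The derivative bounds in the second line of \eqref{Delta2Ls} follow by differentiating the integral representations; the identity $\partial_y\fint_\alpha h(y)=\alpha^{-1}[h(y)-h(y-\alpha)]$ converts averages into pointwise differences and one repeats the case analysis with an extra $\alpha^{-1}$ factor.

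For the $H^1$-valued estimates \eqref{DeltaLsg}--\eqref{ddxDeltag1g2L^2}, I would rely on three tools: the Sobolev embedding $\Vert g\Vert_{L^\infty}\lesssim\Vert g\Vert_{H^1}$, the Cauchy--Schwarz bound $|\fint_\alpha g(y)|\lesssim\alpha^{-1/2}\Vert g\Vert_{L^2}$ valid at large scales, and the FTC identity $g(y+u)-g(y-u)=\int_{-u}^u g'(y+v)\,dv$ for the difference piece. Combining $|\fint_\alpha L_s|\lesssim s$ with these gives \eqref{DeltaLsg} after interpolating between $\alpha\le 1$ (using the $L^\infty$ tool) and $\alpha\ge 1$ (using the $L^2$ tool with the $\alpha^{-1/2}$ gain). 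The sharper integrated bounds in \eqref{Deltag1g2L^2} are obtained by keeping $g$ or $g'$ inside the $u$-integral rather than taking $L^\infty$ norms, so that the outer $\alpha^{-1}$ combines with the change of variable $t=\pm u$ into an integral over $|t|\le\alpha$ of $|g|$ or $|g'|$. Finally \eqref{ddxDeltag1g2L^2} follows the same strategy: apply $\alpha\partial_y$, use $\alpha\partial_y\fint_\alpha h(y)=h(y)-h(y-\alpha)$ to convert each average into a pointwise difference, and bound the $L_s$ factors uniformly by $s$.

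The main obstacle is keeping the bookkeeping sharp across all the regimes simultaneously. Each bound hides an interpolation: between the small-$\alpha$ scale set by smoothness and the large-$\alpha$ scale set by integrability at infinity; between the near-origin behaviour of $L_s$ (linear growth) and the far-field behaviour ($L_s\to s$). The sharp $\ln\langle\alpha\rangle$ factor in \eqref{Delta2Ls}, in particular, forces one to be careful at the transition $y\approx\alpha$, and the $\min\{\alpha^{1/2},\alpha^{-1/2}\}$-type thresholds require matching $L^\infty$ and $L^2$ estimates cleanly at $\alpha\simeq 1$.
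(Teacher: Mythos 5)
Your proposal follows essentially the same route as the paper: the same polarization of $\delta_\alpha$ into a symmetric average times an antisymmetric difference (the paper's identity $2\delta_\alpha[f,g]=\delta[f,g]+\delta[g,f]$ with \eqref{DefDelta2}), the same intermediate bounds on each factor (the paper's \eqref{ProofDeltaLs1} and \eqref{CompIntgDelta}), and the same device of converting $\partial_y$ of an average into a finite difference for \eqref{ddxDeltag1g2L^2}. One small slip: the displayed heuristic $yL_s(\alpha)\lesssim sy\min\{\alpha,\alpha^{-1}\}$ in the regime $y\le\alpha$ is false as written for large $\alpha$ (where $L_s(\alpha)\to s$); the needed $\alpha^{-1}$ there comes from the $1/\alpha$ normalization of the symmetric average combined with the oddness cancellation, a factor your own framework already supplies.
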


\begin{proof}[Proof of Lemma \ref{ControlDelta2}]

The key observation is that $2\delta_\alpha[f,g]=\delta[f,g]+\delta[g,f]$ where
\begin{equation}\label{DefDelta2}
\begin{split}
\delta[f,g]&:=-\frac{1}{\alpha}\int_{-\alpha}^\alpha f(y+t)dt\cdot\frac{1}{\alpha}\int_{t=0}^\alpha\left\{g(y+t)-g(y-t)\right\}dt.\\
\end{split}
\end{equation}
We see that the first term in $\delta$ vanishes to first order for odd $f$ if $\alpha\gg1$, while the second term contains a difference and vanishes to first order for $\alpha\ll 1$ when $g$ is smooth. The bounds in \eqref{Delta2Ls} follow directly from the formula \eqref{DefDelta2} and the bounds
\begin{equation}\label{ProofDeltaLs1}
\begin{split}
s^{-1}\left\vert \frac{1}{\alpha}\int_{-\alpha}^\alpha L_s(y+t)dt\right\vert &\lesssim y/(\langle y\rangle+\langle\alpha\rangle),\\
s^{-1}\left\vert \frac{1}{\alpha}\int_{t=0}^\alpha\left\{L_s(y+t)-L_s(y-t)\right\}dt\right\vert&\lesssim \mathfrak{1}_{\{0\le\alpha\le y\}}\alpha\ln\langle\alpha\rangle\cdot\langle y\rangle^{-2}+\mathfrak{1}_{\{0\le y\le\alpha\}}\min\{\alpha,1\},
\end{split}
\end{equation}
and
\begin{equation*}
\begin{split}
s^{-1}\left\vert\frac{1}{\alpha}\int_{-\alpha}^\alpha L_s^\prime(y+t)dt\right\vert&\lesssim \langle y\rangle^{-1}\langle y-\alpha\rangle^{-1}\mathfrak{1}_{\{0\le\alpha\le  y\}}+\langle\alpha\rangle^{-1}\mathfrak{1}_{\{0\le y\le \alpha\}},\\
s^{-1}\left\vert \frac{1}{\alpha}\int_{t=0}^\alpha\left\{L^\prime_s(y+t)-L^\prime_s(y-t)\right\}dt\right\vert&\lesssim \alpha\min\{y,\langle y-\alpha\rangle^{-1}\langle y\rangle^{-2}\}\mathfrak{1}_{\{0\le\alpha\le y\}}+\min\{\alpha^2,\alpha^{-1}\}\mathfrak{1}_{\{0\le  y\le\alpha\}}.
\end{split}
\end{equation*}
For general odd functions, we obtain that
\begin{equation}\label{CompIntgDelta}
\begin{split}
\left\vert\frac{1}{\alpha}\int_{-\alpha}^\alpha g(y+t)dt\right\vert&\lesssim \min\{\sqrt{ y+\alpha},\langle\alpha\rangle^{-\frac{1}{2}}\}\Vert g\Vert_{H^1},\\
\left\vert\frac{1}{\alpha}\int_{t=0}^\alpha\left\{g(y+t)-g(y-t)\right\}dt\right\vert&\lesssim \int_{\{\vert t\vert\le\alpha\}}\vert g^\prime(y+t)\vert dt,\\
\left\vert\frac{1}{\alpha}\int_{t=0}^\alpha\left\{g(y+t)-g(y-t)\right\}dt\right\vert&\lesssim\min\{\alpha^\frac{1}{2},\alpha^{-\frac{1}{2}}\}\Vert g\Vert_{H^1},
\end{split}
\end{equation}
from which we deduce \eqref{DeltaLsg} and \eqref{Deltag1g2L^2}. In addition, we observe that
\begin{equation*}
\begin{split}
\delta[\partial_xg_1,g_2]&=\frac{g_1(y-\alpha)-g_1(y+\alpha)}{\alpha}\cdot\frac{1}{\alpha}\int_{t=0}^\alpha\left\{g_2(y+t)-g_2(y-t)\right\} dt,\\
\delta[g_1,\partial_xg_2]&=-\frac{1}{\alpha}\int_{-\alpha}^\alpha g_1(y+t)dt\cdot\frac{g_2(y+\alpha)-2g_2(y)+g_2(y-\alpha)}{\alpha},\\
\end{split}
\end{equation*}
and using \eqref{ProofDeltaLs1}, we easily arrive at \eqref{ddxDeltag1g2L^2}.

\end{proof}

\begin{lemma}\label{PropJ}

Let $J_\alpha$ be defined as in \eqref{QuadraticBricks}. For $1\leq p\leq \infty$, there holds that
\begin{equation}\label{DFintaDa}
\begin{split}
\Vert \fint_\alpha h\Vert_{L^\infty_{\alpha}L^p_y}+\Vert \alpha\partial_\alpha\left(\fint_{\pm\alpha}h\right)\Vert_{L^\infty_{\alpha}L^p_y}&\lesssim \Vert h\Vert_{L^p},\\
\Vert J_\alpha[h]\Vert_{L^\infty}+\Vert \alpha\partial_\alpha J_\alpha [h]\Vert_{L^\infty}&\lesssim \Vert h\Vert_{L^\infty}^2,
\end{split}
\end{equation}
and for $L_s$ we have the more precise bound,
\begin{equation}\label{JLs}
\begin{split}
s^{-2}\vert J_\alpha[L_s](y)\vert&\lesssim \alpha\ln\langle \alpha\rangle\cdot \langle y\rangle^{-2}\mathfrak{1}_{\{0\le\alpha\le y\}}+\min\{\alpha^2,1\}\mathfrak{1}_{\{0\le  y \le \alpha\}}\lesssim\min\{\alpha,1\},
\end{split}
\end{equation}
while
\begin{equation}\label{JgL}
\begin{split}
s^{-1}\vert J_\alpha[h,L_s](y)\vert&\lesssim \int_{\vert t\vert\le\alpha}\left[\vert h(y+t)\vert+\vert h^\prime(y+t)\vert\right]dt,\\
\vert J_\alpha[h_1,h_2](y)\vert&\lesssim \sum_{\{a,b\}=\{1,2\}}\Vert h_a\Vert_{L^\infty}\int_{\vert t\vert\le\alpha}\left[\vert h_b(y+t)\vert+\vert h_b^\prime(y+t)\vert\right]dt.
\end{split}
\end{equation}

\end{lemma}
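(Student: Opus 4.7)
My approach builds on the following symmetrization identity, obtained by writing $2(uv+wz)=(u+w)(v+z)+(u-w)(v-z)$ with $u=\fint_\alpha h_1, w=\fint_{-\alpha}h_1$ and similarly for $h_2$:
\begin{equation*}
J_\alpha[h_1,h_2]=\tfrac{1}{4}\bigl[(A_\alpha h_1-2h_1)(A_\alpha h_2+2h_2)+(A_\alpha h_1+2h_1)(A_\alpha h_2-2h_2)\bigr]+\tfrac{1}{2}D_\alpha h_1\cdot D_\alpha h_2,
\end{equation*}
where $A_\alpha h:=\fint_\alpha h+\fint_{-\alpha}h=\alpha^{-1}\int_{-\alpha}^\alpha h(y+t)\,dt$ and $D_\alpha h:=\fint_\alpha h-\fint_{-\alpha}h=-\alpha^{-1}\int_0^\alpha[h(y+t)-h(y-t)]\,dt$. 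This is the $J_\alpha$-analogue of the decomposition \eqref{DefDelta2} used for $\delta_\alpha$: each term factors a ``smoothness'' piece that vanishes with $\alpha$ against a ``coarse'' $L^\infty$-type piece. Crucially, $D_\alpha h$ coincides with the second factor of $\delta[\,\cdot\,,\,\cdot\,]$ in \eqref{DefDelta2}, so its estimates are already prepared in the proof of Lemma \ref{ControlDelta2}.

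I would first dispose of \eqref{DFintaDa} by direct computation. Minkowski's inequality after a change of variables gives $\|\fint_{\pm\alpha}h\|_{L^p_y}\le\|h\|_{L^p}$ uniformly in $\alpha$, and differentiating $\fint_\alpha h(y)=\alpha^{-1}\int_{y-\alpha}^y h(z)\,dz$ yields $\alpha\partial_\alpha\fint_\alpha h(y)=h(y-\alpha)-\fint_\alpha h(y)$ (and analogously for $\fint_{-\alpha}$), whose $L^p$ norm is at most $2\|h\|_{L^p}$. The $L^\infty$ bound on $J_\alpha[h]=(\fint_\alpha h)^2+(\fint_{-\alpha}h)^2-2h^2$ is then a triangle inequality, and the bound on $\alpha\partial_\alpha J_\alpha[h]=2(\fint_\alpha h)(\alpha\partial_\alpha\fint_\alpha h)+2(\fint_{-\alpha}h)(\alpha\partial_\alpha\fint_{-\alpha}h)$ combines the preceding estimates via the Leibniz rule.

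For the refined estimate \eqref{JLs} I apply the symmetrization at $h_1=h_2=L_s$. The factor $D_\alpha L_s$ is exactly the second factor in \eqref{DefDelta2} applied to $L_s$, so its pointwise bound is precisely the second line of \eqref{ProofDeltaLs1}, and squaring gives a contribution strictly dominated by the target. For the other summand, $|A_\alpha L_s+2L_s|\lesssim s$ follows from \eqref{DFintaDa}, and the cancellation resides in
\begin{equation*}
A_\alpha L_s-2L_s(y)=\tfrac{1}{\alpha}\int_0^\alpha[L_s(y+t)+L_s(y-t)-2L_s(y)]\,dt=\tfrac{1}{\alpha}\int_0^\alpha(\alpha-\tau)[L_s'(y+\tau)-L_s'(y-\tau)]\,d\tau,
\end{equation*}
where the second equality follows by integrating by parts in $t$. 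Using the explicit pointwise bounds $|L_s'(z)|\lesssim s\langle z\rangle^{-2}$ and $|L_s''(z)|\lesssim s|z|\langle z\rangle^{-4}$ derived directly from $L_s=(2s/\pi)\arctan(y/\kappa)$, one verifies the claimed bound case-by-case in the regimes $0\le\alpha\le y$ and $0\le y\le\alpha$.

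Finally, \eqref{JgL} follows by the asymmetric polarization $J_\alpha[h_1,h_2]=\tfrac{1}{2}(A_\alpha h_1-2h_1)\,A_\alpha h_2+h_1(A_\alpha h_2-2h_2)+\tfrac{1}{2}D_\alpha h_1\,D_\alpha h_2$: for $J_\alpha[h,L_s]$ I would take $h_1=L_s$, $h_2=h$, so that $L_s$ appears as a pointwise coefficient in the middle term, multiplying $A_\alpha h-2h=\alpha^{-1}\int_{-\alpha}^\alpha[h(y+t)-h(y)]\,dt$, which by $h(y+t)-h(y)=\int_0^t h'(y+s)\,ds$ is bounded pointwise by $\int_{|t|\le\alpha}|h'(y+t)|\,dt$; the small factors $A_\alpha L_s-2L_s$ and $D_\alpha L_s$ (each $\lesssim s$) pair with $A_\alpha h$ and $D_\alpha h$, which are each pointwise bounded by $\alpha^{-1}\int_{|t|\le\alpha}|h(y+t)|\,dt$. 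The $J_\alpha[h_1,h_2]$ case is handled identically, symmetrizing which factor absorbs the integral of $|h_j|$. The main technical obstacle I anticipate is tracking the correct power of $\langle y\rangle$ in the transition regime $\alpha\approx y$ of \eqref{JLs}; this requires using that $L_s''$ vanishes \emph{linearly} at the origin (and decays cubically at infinity), which is precisely what matches the $\ln\langle\alpha\rangle$ factor in the target bound.
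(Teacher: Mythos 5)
Your proof is correct and follows essentially the same route as the paper: your symmetrization of $J_\alpha$ into $(A_\alpha h\mp 2h)$ and $D_\alpha h$ factors is exactly the paper's decomposition \eqref{DefIAlpha}, your integration by parts for $A_\alpha L_s-2L_s$ reproduces the paper's identity \eqref{SecondSymmetrizationJ}, and the odd factor $D_\alpha L_s$ is handled via \eqref{ProofDeltaLs1} in both arguments. One small point to make explicit in \eqref{JgL}: when you pair $A_\alpha L_s-2L_s$ and $D_\alpha L_s$ against $|A_\alpha h|,|D_\alpha h|\lesssim\alpha^{-1}\int_{|t|\le\alpha}|h(y+t)|\,dt$, the stated bound ``each $\lesssim s$'' does not suffice for $\alpha\le1$ --- you need the sharper $\lesssim s\min\{\alpha,1\}$ to absorb the $\alpha^{-1}$, which follows immediately from your own integration-by-parts formula together with $\Vert L_s'\Vert_{L^\infty}\lesssim s$ and from $|D_\alpha L_s|\le\int_{|t|\le\alpha}|L_s'(y+t)|\,dt$.
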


\begin{proof}[Proof of Lemma \ref{PropJ}]

For \eqref{DFintaDa}, the estimates in $\fint_\alpha h$ follow from direct computations. They directly imply the estimates on $J_\alpha$. To analyze $J_\alpha$, we can rewrite $J_\alpha[h]=J[h,h]$ where\footnote{Note that since $J$ is not symmetric, $J_\alpha[g,h]=J[g,h]+J[h,g]$.}
\begin{equation}\label{DefIAlpha}
\begin{split}
J[g,h]&=\frac{1}{2\alpha}\int_{t=0}^\alpha(g(y+t)-2g(y)+g(y-t))dt\cdot\frac{1}{\alpha}\int_{t=0}^\alpha(h(y+t)+2h(y)+h(y-t))dt\\
&\quad+\frac{1}{2}\left(\frac{1}{\alpha}\int_{t=0}^\alpha(g(y+t)-g(y-t))dt\right)\cdot\left(\frac{1}{\alpha}\int_{t=0}^\alpha(h(y+t)-h(y-t))dt\right).
\end{split}
\end{equation}
For \eqref{JLs}, the second term can be estimated using \eqref{ProofDeltaLs1}, while for the first term, we rewrite
\begin{equation}\label{SecondSymmetrizationJ}
\begin{split}
\frac{1}{\alpha}\int_{t=0}^\alpha(h(y+t)-2h(y)+h(y-t))dt
&=\frac{1}{\alpha}\int_{u=-\alpha}^\alpha (\alpha-\vert u\vert)^2h^{\prime\prime}(y+u)du.
\end{split}
\end{equation}
It remains to check \eqref{JgL}. For the linear term, this follows from \eqref{DefIAlpha} and \eqref{ProofDeltaLs1} for the second term, while for the first term, we use \eqref{SecondSymmetrizationJ} in case $g=L_s$ and
\begin{equation*}
\begin{split}
\left\vert \frac{1}{\alpha}\int_{t=0}^\alpha(h(y+t)-2h(y)+h(y-t))dt\right\vert&\le \int_{\{\vert t\vert\le\alpha\}}\vert h^\prime(y+t)\vert dt
\end{split}
\end{equation*}
else. For the quadratic estimate, we proceed similarly for the first term in \eqref{DefIAlpha} and we use \eqref{CompIntgDelta} for the second term.

\end{proof}

\subsection{Control on the nonlinear operators}

In this section, we will upgrade the bounds on the basic bricks in Section \ref{ElementaryBricks} to bounds on more complicated operators. Since they will not be multilinear, we will only operate under the assumption that
\begin{equation}\label{GlobalBoundG1G2G3}
\Vert g_1\Vert_{H^1}+\Vert g_2\Vert_{H^1}+\Vert g_3\Vert_{H^1}\le 1.
\end{equation}

\subsubsection{Symmetrization of the nonlinear operators}

We can symmetrize to obtain a key cancellation in the operators involved in \eqref{SSeq3}. The main observation is that one can always extract $\delta$ defined in \eqref{DefDelta2}. Using that
\begin{equation}\label{GSymRule}
\begin{split}
\int_{\mathbb{R}}H(\alpha)\frac{d\alpha}{\alpha}=\int_{\mathbb{R}_+}\left\{H(\alpha)-H(-\alpha)\right\}\frac{d\alpha}{\alpha},
\end{split}
\end{equation}
and the symmetrization formulas
\begin{equation}\label{SymF}
\begin{split}
F(a)-F(b)&=(b^2-a^2)\cdot F(a)F(b),\\
F(a)+F(b)&=(2+a^2+b^2)\cdot F(a)F(b),
\end{split}
\end{equation}
\begin{equation}\label{ProdSym}
\begin{split}
2(a_+b_+-a_-b_-)&=(a_+-a_-)(b_++b_-)+(a_++a_-)(b_+-b_-),\\
2(a_+b_++a_-b_-)&=(a_++a_-)(b_++b_-)+(a_+-a_-)(b_+-b_-),
\end{split}
\end{equation}
we can rewrite $W[g]=W[g,g;g]$, where
\begin{equation*}\label{SymW}
\begin{split}
W[g_1,g_2;g_3]&=\frac{1}{\pi}\int_{\mathbb{R}_+} \delta_\alpha[g_1,g_2]\cdot F(\fint_\alpha g_3)\cdot F(\fint_{-\alpha} g_3)\cdot (1+g_3^2)\frac{d\alpha}{\alpha}\\
&=\int_{\mathbb{R}_+} \delta_\alpha[g_1,g_2]\cdot G(g_3,\fint_\alpha g_3,\fint_{-\alpha} g_3)\frac{d\alpha}{\alpha}
\end{split}
\end{equation*}
for some function $G$ analytic in a neighborhood of $(0,0,0)$. In fact, to properly control the linear part, it will be convenient to decompose further $W=W^{lo}+W^{hi}$ where
\begin{equation}\label{SymDecW}
\begin{split}
W^{hi}[g_1,g_2;g_3]&=\int_{\mathbb{R}_+}\delta_\alpha [g_1,g_2]\cdot G(g_3,\fint_\alpha g_3,\fint_{-\alpha} g_3)\varphi(\alpha)\frac{d\alpha}{\alpha},\\
W^{lo}[g_1,g_2;g_3]&=\int_{\mathbb{R}_+} \delta_\alpha[g_1,g_2]\cdot G(g_3,\fint_\alpha g_3,\fint_{-\alpha} g_3)(1-\varphi(\alpha))\frac{d\alpha}{\alpha}.
\end{split}
\end{equation}

\subsubsection{Control on symmetrized operators}

\begin{lemma}\label{lemW}

There holds that for $\ast\in\{hi,lo\}$,
\begin{equation}\label{WLs}
\begin{split}
\Vert W^\ast[L_s]\Vert_{L^\infty}+\Vert \langle y\rangle\partial_yW^\ast[L_s]\Vert_{L^\infty}&\lesssim s^2,
\end{split}
\end{equation}
and
\begin{equation}\label{W1Lsg}
\begin{split}
\Vert W^\ast_1[g]\Vert_{L^\infty}+\Vert W^{hi}_1[g]\Vert_{L^2}+\Vert \partial_y(W^{lo}_1[g])\Vert_{ L^2}&\lesssim s\Vert g\Vert_{H^1}.
\end{split}
\end{equation}
Finally, we have the nonlinear estimates assuming \eqref{GlobalBoundG1G2G3}
\begin{equation}\label{W2}
\begin{split}
\Vert W^\ast_{\ge2}[g]\Vert_{L^2\cap L^\infty}&\lesssim \Vert g\Vert_{H^1}^2,\\
\Vert W^\ast_{\ge2}[g_1]-W^\ast_{\ge2}[g_2]\Vert_{L^\infty}&\lesssim (s+\Vert g_1\Vert_{H^1}+\Vert g_2\Vert_{H^1})\Vert g_1-g_2\Vert_{H^1}.
\end{split}
\end{equation}

\end{lemma}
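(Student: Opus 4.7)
The strategy is to exploit the symmetrized representation $W[f] = \int_{\mathbb{R}_+} \delta_\alpha[f,f] \, G(f, \fint_\alpha f, \fint_{-\alpha} f) \frac{d\alpha}{\alpha}$, where $G(a,b,c) = F(b)F(c)(1+a^2)$ is analytic with $G(0,0,0)=1$ and $\nabla G(0,0,0) = 0$ (since $F'(0)=0$). The $G$-factor and its $y$-derivative are uniformly $O(1)$ and $O(s^2\langle y\rangle^{-2})$ at $f=L_s$, so the whole problem reduces to integrating the elementary $\delta_\alpha$-bricks of Lemma \ref{ControlDelta2} against $d\alpha/\alpha$, with the cutoff $\varphi(\alpha)$ (resp.\ $1-\varphi(\alpha)$) localizing to $\alpha\lesssim 1$ for $W^{hi}$ (resp.\ $\alpha\gtrsim 1$ for $W^{lo}$).

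To prove \eqref{WLs}, I would plug \eqref{Delta2Ls} directly: on $\{\alpha\le y\}$ the bound $s^{-2}|\delta_\alpha[L_s]| \lesssim \alpha \ln\langle\alpha\rangle \min\{y,y^{-2}\}$ integrates against $d\alpha/\alpha$ to $O(s^2)$ on both localizations, while on $\{y\le\alpha\}$ the factor $y\min\{\alpha,\alpha^{-1}\}$ yields the same bound; the derivative estimate proceeds identically with the second line of \eqref{Delta2Ls} and absorbs the extra $\langle y\rangle$ into the $\langle y-\alpha\rangle^{-1}\langle y\rangle^{-2}$ decay. For \eqref{W1Lsg}, differentiating $W[f,f;f]$ at $L_s$ produces the two terms of \eqref{N1}: a bilinear piece $2\int \delta_\alpha[L_s,g]\, G(L_s,\cdots)\, d\alpha/\alpha$ and a $G$-derivative piece $\int \delta_\alpha[L_s]\, (\nabla G\cdot v_g)\, d\alpha/\alpha$, the latter gaining an extra factor $s$ from $\nabla G(0,0,0)=0$. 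The $L^\infty$ bound then follows at once from \eqref{DeltaLsg}. For the $L^2$ bounds on $W^{hi}_1[g]$ and $\partial_y(W^{lo}_1[g])$, I would invoke the integrated estimates \eqref{Deltag1g2L^2} and \eqref{ddxDeltag1g2L^2}, swap the order of integration by Fubini to produce a convolution in $t$ with kernel $\int_{|t|}^\infty (\cdots)\,d\alpha/\alpha$, and close by Minkowski or Young's inequality.

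The remainder $W^\ast_{\ge 2}$ will be handled via the Taylor formulas \eqref{HOTTerms}--\eqref{N22}: with $L_\theta = L_s + \theta g_1 + \cdots$, every term in the second-order expansion has the form $\delta_\alpha[\cdot,\cdot]\,(\nabla^k G)(L_\theta,\cdots)\cdot v^{\otimes k}$. The leading quadratic-in-$g$ contribution $\int \delta_\alpha[g,g]\, G(L_s,\cdots)\, d\alpha/\alpha$ delivers, via \eqref{DeltaLsg}--\eqref{Deltag1g2L^2}, both $L^\infty$ and $L^2$ bounds of size $\Vert g\Vert_{H^1}^2$; the remaining contributions pick up additional smallness either from $\nabla G = O(s+\Vert g\Vert_{H^1})$ or from pairing $\nabla^2 G$ with the small factor $\delta_\alpha[L_\theta]$. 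The Lipschitz estimate follows by the same identity applied with general $g_1,g_2$: each resulting term contains exactly one factor linear in $g_2-g_1$ (either in a $\delta_\alpha$-slot or in a $v_{g_2-g_1}$-slot), and the remaining factors are controlled by $s+\Vert g_1\Vert_{H^1}+\Vert g_2\Vert_{H^1}$.

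The hardest step will be the $L^2$ bound on $\partial_y(W^{lo}_1[g])$: the $y$-derivative costs a factor $\alpha^{-1}$, which combined with the measure $d\alpha/\alpha$ leaves only $\alpha^{-2}$, borderline against the pointwise bound from \eqref{ddxDeltag1g2L^2}. I plan to apply Minkowski in $\alpha$ before taking the $L^2$ norm in $y$, and handle the convolution tail $\frac{1}{\alpha}\int_{|t|\le\alpha}|g(y+t)|\,dt$ by recognizing it as a maximal average bounded in $L^2$ by $\Vert g\Vert_{L^2}$. A related care-point is that the $L^2$ bound on $W^\ast_{\ge 2}[g]$ genuinely requires the factored form \eqref{Deltag1g2L^2} (extracting one $\Vert g\Vert_{H^1}$ outside and leaving $|g(y+t)|$ under the $t$-integral) rather than the cruder $L^\infty$ estimate, which would give the wrong Sobolev balance.
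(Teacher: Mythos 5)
Your proposal is correct and follows essentially the same route as the paper's proof: symmetrize $W$ into the $\delta_\alpha$-bricks with an analytic factor $G$, reduce \eqref{WLs} to integrating \eqref{Delta2Ls} in $\alpha$, reduce \eqref{W1Lsg} via \eqref{N1} to the brick bounds \eqref{DeltaLsg}, \eqref{Deltag1g2L^2} and \eqref{ddxDeltag1g2L^2} (with Fubini/Minkowski for the $L^2$ norms), and control $W_{\ge2}$ through \eqref{HOTTerms}--\eqref{N22} with the same bricks. The only difference worth noting is that for the $L^2$ bound of the $\nabla^2G$-contribution $\int_{\mathbb{R}_+}\vert\delta_\alpha[L_s]\vert\,\vert\fint_{\pm\alpha,0}g_1\vert\,\vert\fint_{\pm\alpha,0}g_2\vert\,\frac{d\alpha}{\alpha}$ the paper splits the $(t,u,\alpha,y)$-integration into three explicit regions to exploit the joint decay in \eqref{Delta2Ls}, whereas your Minkowski-plus-averaged-factor argument works only if you are careful to place the genuinely averaged factor in $L^\infty$ (gaining $\langle\alpha\rangle^{-1/2}$) and the other in $L^2$ --- with that choice both arguments close.
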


\begin{proof}[Proof of Lemma \ref{lemW}]

We start from the formula \eqref{SymDecW}. For \eqref{WLs}, it suffices to show that
\begin{equation}\label{ControlDs}
\Vert \int_{\mathbb{R}_+}\vert \delta_\alpha[L_s]\vert\frac{d\alpha}{\alpha}\Vert_{L^\infty}+\Vert \int_{\mathbb{R}_+}\vert \langle y\rangle\partial_y\delta_\alpha[L_s]\vert\frac{d\alpha}{\alpha}\Vert_{L^\infty}\lesssim s^2,
\end{equation}
which follows from \eqref{Delta2Ls} and the bounds
\begin{equation*}
\Vert L_s\Vert_{L^\infty}+\Vert \langle y\rangle\partial_yL_s\Vert_{L^\infty}\le s.
\end{equation*}

For \eqref{W1Lsg}, inspecting \eqref{N1}, and using \eqref{DFintaDa} and \eqref{ControlDs}, we see that it suffices to show that
\begin{equation}\label{ControlDs1}
\begin{split}
\Vert \int_{\mathbb{R}_+}\vert\delta_\alpha[L_s,g]\vert \frac{d\alpha}{\alpha}\Vert_{L^\infty}+\Vert \int_{\mathbb{R}_+}\vert \delta_\alpha[L_s,g]\vert \varphi(\alpha)\frac{d\alpha}{\alpha}\Vert_{L^2}+\Vert \int_{\mathbb{R}_+}\vert\partial_y\delta_\alpha[L_s,g]\vert (1-\varphi(\alpha)) \frac{d\alpha}{\alpha}\Vert_{L^2}&\lesssim s\Vert g\Vert_{H^1},
\end{split}
\end{equation}
and
\begin{equation*}
\begin{split}
\Vert \int_{\mathbb{R}_+}\vert\delta_\alpha[L_s]\vert\cdot\vert \fint_{\pm\alpha} g\vert \varphi(\alpha)\frac{d\alpha}{\alpha}\Vert_{L^2}+\Vert \int_{\mathbb{R}_+}\vert\delta_\alpha[L_s]\vert\cdot\vert \partial_y\fint_{\pm\alpha} g\vert (1-\varphi(\alpha))\frac{d\alpha}{\alpha}\Vert_{L^2}\lesssim s\Vert g\Vert_{L^2}.
\end{split}
\end{equation*}
We start with the above bound. First, we see from \eqref{Delta2Ls} that $s^{-1}\vert\delta_\alpha[L_s]\vert\lesssim s \min\{1,\sqrt{\vert \alpha\vert}\}$ and we compute that
\begin{equation*}
\begin{split}
\Vert \int_{\mathbb{R}_+}\vert\delta_\alpha[L_s]\vert\cdot\vert \fint_{\pm\alpha} g\vert \varphi(\alpha)\frac{d\alpha}{\alpha}\Vert_{L^2}&\lesssim s\iint_{\{\vert t\vert\le\vert\alpha\vert\le 2\}} \Vert  g(y+t)\Vert_{L^2}\varphi(\alpha)\frac{d\alpha}{\alpha^\frac{3}{2}}dt\lesssim s\Vert g\Vert_{L^2},\\
\end{split}
\end{equation*}
and similarly,
\begin{equation*}
\begin{split}
 \Vert\int_{\mathbb{R}_+}\vert\delta_\alpha[L_s]\vert\cdot\vert \partial_y\fint_{\pm\alpha} g\vert (1-\varphi(\alpha))\frac{d\alpha}{\alpha}\Vert_{L^2}&\lesssim s\int_{\{\alpha\ge1/2\}}\left[\Vert g(y\mp\alpha)\Vert_{L^2_y} +\Vert g(y)\Vert_{L^2_y}\right]\frac{d\alpha}{\alpha^2}\lesssim s\Vert g\Vert_{L^2}.
 \end{split}
\end{equation*}
The first term in \eqref{ControlDs1} can be controlled through \eqref{DeltaLsg}, the second term can be controlled through \eqref{Deltag1g2L^2} and the last term through \eqref{ddxDeltag1g2L^2}.

Finally, we consider \eqref{W2}. Inspecting \eqref{HOTTerms}, \eqref{N21} and \eqref{N22}, we see that we need to show
\begin{equation}\label{ControlDs2}
\begin{split}
\Vert \int_{\mathbb{R}_+}\vert \delta_\alpha[g_1,g_2]\vert\frac{d\alpha}{\alpha}\Vert_{L^\infty\cap L^2}&\lesssim \Vert g_1\Vert_{H^1}\Vert g_2\Vert_{H^1},\\
\Vert \int_{\mathbb{R}_+}\vert \delta_\alpha[L_s,g_1]\cdot\fint_{\pm\alpha,0} g_2\vert\frac{d\alpha}{\alpha}\Vert_{L^\infty\cap L^2}&\lesssim s\Vert g_1\Vert_{H^1}\Vert g_2\Vert_{H^1},\\
\Vert \int_{\mathbb{R}_+}\vert \delta_\alpha[L_s]\cdot\fint_{\pm\alpha,0} g_1\cdot\fint_{\pm\alpha,0} g_2\vert\frac{d\alpha}{\alpha}\Vert_{L^\infty\cap L^2}&\lesssim s^2\Vert g_1\Vert_{H^1}\Vert g_2\Vert_{H^1}.
\end{split}
\end{equation}
The $L^\infty$ bounds follow from \eqref{DeltaLsg} and \eqref{ControlDs} together with the simple bound $\Vert g\Vert_{L^\infty}\lesssim \Vert g\Vert_{H^1}$. The $L^2$ bound follows from \eqref{Deltag1g2L^2} and \eqref{DeltaLsg} for the first two estimates while for the last, we see that
\begin{equation*}
\begin{split}
\int_{\mathbb{R}_+}\vert \delta_\alpha[L_s]\cdot\fint_{\pm\alpha,0} g_1\cdot\fint_{\pm\alpha,0} g_2\vert\frac{d\alpha}{\alpha}\lesssim I_1+I_2+I_3,\qquad I_j:=\iiint_{S_j} \vert\delta_\alpha[L_s](y)\vert \cdot\vert g_1(y+t)g_2(y+u)\vert \frac{d\alpha}{\alpha^3}dtdu,\\
S_1:=\{\vert u\vert\le\vert t\vert\le\alpha\le y\},\qquad
S_2:=\{\vert u\vert, y\le\vert t\vert\le\alpha\},\qquad
S_3:=\{\vert u\vert\le\vert t\vert\le y\le\alpha\},
\end{split}
\end{equation*}
and using \eqref{Delta2Ls}, we can compute that
\begin{equation*}
\begin{split}
\Vert I_1\Vert_{L^2}&\lesssim s^2\iint_{\{\vert u\vert\le\vert t\vert\}} \frac{\ln\langle t\rangle}{t\langle t\rangle^2} \cdot\Vert g_1(y+t)g_2(y+u)\Vert_{L^2_y} dtdu\lesssim s^2 \Vert g_1\Vert_{L^2}\Vert g_2\Vert_{L^\infty},\\
\end{split}
\end{equation*}
while
\begin{equation*}
\begin{split}
\vert I_2\vert&\lesssim s^2\iint \langle t\rangle^{-2}\vert g_1(y+t)g_2(y+u)\vert dtdu\lesssim s^2\Vert g_2\Vert_{L^2}\iint \langle t\rangle^{-\frac{3}{2}}\vert g_1(y+t)\vert dt,\\
\vert I_3\vert&\lesssim s^2\langle y\rangle^{-2}\iint_{\{\vert u\vert\le\vert t\vert\le y\}}\vert g_1(y+t)g_2(y+u)\vert dudt\lesssim s^2\langle y\rangle^{-1}\Vert g_1\Vert_{L^2}\Vert g_2\Vert_{L^2},
\end{split}
\end{equation*}
and we see that $I_2$ and $I_3$ are in $L^2$. This finishes the proof.

\end{proof}

We now turn to the first semilinear term.

\begin{lemma}\label{LemR}

Assume \eqref{GlobalBoundG1G2G3} and consider $R$ defined in \eqref{DefRT}. There holds that
\begin{equation}\label{RLs}
\begin{split}
\Vert R[L_s]\Vert_{L^\frac{4}{3}\cap L^2}&\lesssim s^3,\qquad
\Vert R_1[g]\Vert_{L^\frac{4}{3}\cap L^2}\lesssim s^2\Vert g\Vert_{H^1},\\
\Vert R_{\ge2}[g_1]-R_{\ge2}[g_2]\Vert_{L^1\cap L^2}&\lesssim \left[s^2+\Vert g_1\Vert_{H^1}^2+\Vert g_2\Vert_{H^1}^2\right]\Vert g_2-g_1\Vert_{H^1}.
\end{split}
\end{equation}

\end{lemma}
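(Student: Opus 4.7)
The plan is to apply the same symmetrization philosophy used for $W$ (cf.\ \eqref{SymDecW}) to extract the quadratic bricks $\delta_\alpha$ and $J_\alpha$ from $R[h]$, and then reduce everything to the quantitative bounds of Lemma \ref{ControlDelta2} and Lemma \ref{PropJ}. First, using the symmetrization rule \eqref{GSymRule} applied to the integral over $\mathbb{R}$, together with the identities \eqref{ProdSym} applied with $a_\pm=\mp h'(y\mp\alpha)$ and $b_\pm=((\fint_{\pm\alpha}h)^2-h^2)F(\fint_{\pm\alpha}h)$, and the cancellation identities
\begin{equation*}
(\fint_\alpha h)^2+(\fint_{-\alpha}h)^2-2h^2=J_\alpha[h],\qquad (\fint_\alpha h)^2-(\fint_{-\alpha}h)^2=\delta_\alpha[h],
\end{equation*}
we rewrite
\begin{equation*}
R[h]=\frac{1}{2\pi}\int_0^\infty\!\bigl\{(h'(y{+}\alpha)-h'(y{-}\alpha))\,J_\alpha[h]\,G_1(h,\fint_\alpha h,\fint_{-\alpha}h)-(h'(y{+}\alpha)+h'(y{-}\alpha))\,\delta_\alpha[h]\,G_2(h,\fint_\alpha h,\fint_{-\alpha}h)\bigr\}\frac{d\alpha}{\alpha},
\end{equation*}
where $G_1,G_2$ are rational functions analytic near the origin. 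This exposes both elementary bricks.

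Second, for $R[L_s]$ I would plug in $h=L_s$ and use \eqref{JLs} to bound $|J_\alpha[L_s]|\lesssim s^2\min(\alpha,1)$ on the set controlled by $\langle y\rangle^{-2}$ at large $y$, together with \eqref{Delta2Ls} for $\delta_\alpha[L_s]$. The factors $h'(y\pm\alpha)\sim L_s'$ contribute a further $s$ and supply the pointwise decay $\langle y\pm\alpha\rangle^{-2}$. After splitting the $\alpha$-integral into $\{\alpha\le1\}$ and $\{\alpha\ge1\}$ and integrating in $\alpha$, one obtains a pointwise bound of the form $|R[L_s](y)|\lesssim s^3\langle y\rangle^{-\beta}$ (for a suitable $\beta>1$), which yields the $L^{4/3}\cap L^2$ bound in \eqref{RLs}.

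Third, for $R_1[g]$ I would linearize by \eqref{N1}: differentiation hits either a linear slot (producing $g'(y\pm\alpha)$ against the same bricks $J_\alpha[L_s]$, $\delta_\alpha[L_s]$), one of the two $h$'s inside $J_\alpha$ or $\delta_\alpha$ (producing the bilinear bricks $J_\alpha[L_s,g]$ and $\delta_\alpha[L_s,g]$), or inside $G_1,G_2$ (producing an extra $L_s$ factor and a $\fint_{\pm\alpha,0}g$). Each of these contributes at most $s^2\Vert g\Vert_{H^1}$ after invoking \eqref{DeltaLsg}, \eqref{Deltag1g2L^2}, \eqref{ddxDeltag1g2L^2}, \eqref{JgL} and Minkowski in $\alpha$, with the splitting $\alpha\lessgtr 1$ again supplying integrability. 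The nonlinear remainder $R_{\ge2}[g_1]-R_{\ge2}[g_2]$ is controlled through \eqref{HOTTerms} combined with the second-order expressions \eqref{N21}–\eqref{N22}; the resulting integrand is of the form $[\cdots]\cdot\delta_\alpha[\cdot,\cdot]$ or $[\cdots]\cdot J_\alpha[\cdot,\cdot]$ with slots occupied by $L_s$ or by one of the $g_i$'s and by the difference $g_2-g_1$. The generic term is bounded using the bilinear/trilinear versions of Lemmas \ref{ControlDelta2} and \ref{PropJ}, paired with the Sobolev embedding $H^1\hookrightarrow L^\infty$, to extract $(s^2+\Vert g_1\Vert_{H^1}^2+\Vert g_2\Vert_{H^1}^2)\Vert g_1-g_2\Vert_{H^1}$.

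I expect the main obstacle to be the careful bookkeeping in the $\alpha$-integral to obtain the target $L^{4/3}\cap L^2$ (respectively $L^1\cap L^2$) norms and not merely $L^\infty$. The $\alpha^{-1}$ singularity forces one to exploit the full strength of the small-$\alpha$ decay of $J_\alpha,\delta_\alpha$ coming from \eqref{Delta2Ls}–\eqref{JLs} (linear in $\alpha$), while the $L^p$-integrability in $y$ at large $y$ requires combining the localization $\langle y\rangle^{-2}\mathfrak{1}_{\{\alpha\le y\}}$ from Lemma \ref{ControlDelta2} with Minkowski's inequality applied to the $L^p_y$-norm of translates $g(y+t)$ appearing in \eqref{Deltag1g2L^2} and \eqref{JgL}. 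Matching the weighted kernel estimates in $\alpha$ against the correct $L^p_y$ bounds is the principal calculational burden, but once the symmetrized representation above is in place, the proof reduces to a direct application of the elementary-brick estimates already collected in Section \ref{ElementaryBricks}.
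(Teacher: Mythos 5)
Your symmetrized representation of $R[h]$ is, up to bookkeeping (the paper keeps the $[\delta_\alpha]^2$ contribution as a separate third term $R^3$ rather than absorbing it into $G_1$), exactly the paper's decomposition into $R^1$ (difference of derivatives against $J_\alpha$) and $R^2$ (sum of derivatives against $\delta_\alpha$), and your treatment of $R[L_s]$ via \eqref{JLs}, \eqref{Delta2Ls} and the explicit decay of $L_s'(y-\alpha)-L_s'(y+\alpha)$ matches the paper's \eqref{R23Ls}--\eqref{R1Ls}. The gap is in the terms where the derivative falls on the perturbation, i.e.\ $R^j[g_1;L_s+g_2]$. First, for large $\alpha$ you cannot simply use Minkowski against the sup bound $\vert J_\alpha[L_s]\vert\lesssim s^2\min\{\alpha,1\}$: this leaves $\int_1^\infty\Vert g_1'\Vert_{L^2}\,d\alpha/\alpha=\infty$, and the paper must integrate by parts in $\alpha$ (the terms $R^{j,b}$ and $R^{1,lo}_A$, $R^{2,lo}$) to move the derivative off $g_1$ and gain decay from $\alpha\partial_\alpha J_\alpha$ and $1/\alpha$; your proposal never mentions this step, and your ``splitting $\alpha\lessgtr1$'' is not the right splitting either --- for the linear term the paper cuts at $\alpha\sim\langle y\rangle$ and uses Cauchy--Schwarz in $\alpha$ against the $\langle y\rangle^{-2}$ localization of the bricks.

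Second, and more seriously, the $L^2$ bound for the genuinely bilinear terms $R^1[g_1;L_s+g_2]-R^1[g_1;L_s]$ does not follow from ``the elementary-brick estimates paired with $H^1\hookrightarrow L^\infty$.'' By \eqref{JgL} the brick $J_\alpha[L_s,g_2]$ (or $J_\alpha[g_2,g_3]$) necessarily produces $\int_{\vert t\vert\le\alpha}\vert g_2'(y+t)\vert\,dt$, so the integrand contains the product $g_1'(y\pm\alpha)\cdot g_2'(y+t)$ of two functions that are merely in $L^2$; Sobolev embedding controls $g_i$ in $L^\infty$ but not $g_i'$, so this product is in $L^1_y$ but there is no way to place it in $L^2_y$ by Minkowski. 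This is precisely why the paper introduces the Littlewood--Paley decomposition $g_1=\sum_B g_{1,B}$ with the frequency-dependent cutoff $\varphi(A\alpha)$, $A=B^\delta+B^{1-\delta}$: for $\alpha\lesssim A^{-1}$ it writes $g_1'(y+\alpha)-g_1'(y-\alpha)=\int_{-\alpha}^{\alpha}g_1''(y+t)\,dt$, paying a factor of the frequency $B$ to gain a factor $\alpha$ and an $L^2$-integrable kernel, while for $\alpha\gtrsim A^{-1}$ it integrates by parts; the two costs $C(hi,A,g_B)$ and $C(lo,A,g_B)$ are then summed over $B$. Your proposal contains no substitute for this mechanism, and without it the $L^2$ half of the third estimate in \eqref{RLs} does not close.
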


\begin{proof}[Proof of Lemma \ref{LemR}]

From \eqref{SymF} and \eqref{ProdSym} we deduce that
\begin{equation}\label{TripleSym}
\begin{split}
4(a_+b_+c_+-a_-b_-c_-)&=(a_+-a_-)(b_++b_-)(c_++c_-)+(a_++a_-)(b_+-b_-)(c_++c_-)\\
&\quad+(a_++a_-)(b_++b_-)(c_+-c_-)+(a_+-a_-)(b_+-b_-)(c_+-c_-).
\end{split}
\end{equation}
We can use \eqref{GSymRule}, \eqref{TripleSym} and \eqref{SymF} to decompose
\begin{equation*}
\begin{split}
R[h]&=-\frac{1}{\pi}\int_{\mathbb{R}}h^\prime(y-\alpha)\cdot \frac{(\fint_\alpha h)^2-h^2}{1+(\fint_\alpha h)^2}\frac{d\alpha}{\alpha}=\frac{1}{4\pi}(R^1-2R^2-R^3),
\end{split}
\end{equation*}
where $R^j=R^j[h;h]$ is given by
\begin{equation*}
\begin{split}
R^1[h_1;h_2]&=\int_{\mathbb{R}_+}\partial_\alpha\left\{h_1(y+\alpha)+h_1(y-\alpha)\right\}\cdot J_\alpha[h_2]\cdot(F_\alpha +F_{-\alpha})\frac{d\alpha}{\alpha},\\
R^2[h_1;h_2]&=\int_{\mathbb{R}_+}\partial_\alpha \left\{h_1(y+\alpha)-h_1(y-\alpha)\right\}\cdot\delta_\alpha[h_2](y) \cdot(1+h^2_2) \left(F_\alpha F_{-\alpha}\right)\frac{d\alpha}{\alpha},\\
R^3[h_1;h_2]&=\int_{\mathbb{R}_+}\partial_\alpha\left\{h_1(y+\alpha)+h_1(y-\alpha)\right\}\cdot \left[\delta_\alpha[h_2](y)\right]^2\cdot (F_\alpha F_{-\alpha})\frac{d\alpha}{\alpha},\\
\end{split}
\end{equation*}
with the notation
\begin{equation*}
\begin{split}
F_\alpha:=F(\fint_\alpha h_2).
\end{split}
\end{equation*}
From now on, we will denote $F_\alpha=F(\fint_{\pm\alpha} h)$ without distinction on $h$.

We start with the first estimate in \eqref{RLs}. First,  using \eqref{Delta2Ls} we observe that
\begin{equation}\label{R23Ls}
\begin{split}
\vert R^2[L_s;L_s]\vert+\vert R^3[L_s;L_s]\vert&\lesssim s\int_{\mathbb{R}_+}\langle y-\alpha\rangle^{-2}\vert \delta_\alpha[L_s]\vert\frac{d\alpha}{\alpha}\lesssim s^3\langle y\rangle^{-1}.
\end{split}
\end{equation}
In addition, using that
\begin{equation*}
\begin{split}
L_s^\prime(y-\alpha)-L_s^\prime(y+\alpha)&=\frac{2s\kappa}{\pi}\frac{4y\alpha}{(\kappa^2+(y-\alpha)^2)(\kappa^2+(y+\alpha)^2)},
\end{split}
\end{equation*}
and \eqref{JLs}, we see that
\begin{equation}\label{R1Ls}
\begin{split}
\vert R^1[L_s;h](y)\vert&\lesssim s\Vert h\Vert_{L^\infty}^2y\int \frac{d\alpha}{(\kappa^2+(y-\alpha)^2)(\kappa^2+(y+\alpha)^2)}\lesssim s\Vert h\Vert_{L^\infty}^2\cdot y\langle y\rangle^{-2}.
\end{split}
\end{equation}

\medskip

We now turn to the other estimates in \eqref{RLs} and we start with $\widetilde{R}^j[L_s+g]:=R^j[L_s;L_s+g]$. Inspecting \eqref{N1}, we see that the linear component follows from \eqref{R23Ls}, \eqref{R1Ls} and from the bound
\begin{equation*}
\begin{split}
s\int_{\mathbb{R}_+}\langle y-\alpha\rangle^{-2}\vert \delta_\alpha[L_s,g]\vert\frac{d\alpha}{\alpha}\lesssim s^2\Vert g\Vert_{H^1}\cdot \langle y\rangle^{-3/2},
\end{split}
\end{equation*}
which follows from \eqref{DeltaLsg}. Similarly, inspecting \eqref{HOTTerms} and \eqref{N21}-\eqref{N22}, we see that the higher order terms can be controlled similarly since we can easily estimate
\begin{equation*}
\begin{split}
s\int_{\mathbb{R}_+}\langle y-\alpha\rangle^{-2}\vert \delta_\alpha[g_1,g_2]\vert\frac{d\alpha}{\alpha}\lesssim s\Vert g_1\Vert_{H^1}\Vert g_2\Vert_{H^1}\cdot \langle y\rangle^{-3/2}
\end{split}
\end{equation*}
using \eqref{DeltaLsg} again.

\medskip

We now consider $R^j[g_1;L_s]$. Using Cauchy-Schwarz and \eqref{Delta2Ls}, \eqref{JLs}, we see that
\begin{equation*}
\begin{split}
R^{1,a}[h]&:=\int_{\mathbb{R}^+} \partial_y\left\{h(y+\alpha)-h(y-\alpha)\right\}\cdot J_\alpha[L_s]\cdot (F_\alpha +F_{-\alpha})\cdot \varphi(4\langle y\rangle^{-1}\alpha) \frac{d\alpha}{\alpha},\\
R^{2,a}[h]&:=\int_{\mathbb{R}^+} \partial_y\left\{h(y+\alpha)+h(y-\alpha)\right\}\cdot \delta_\alpha[L_s]\cdot (1+L_s^2)(F_\alpha F_{-\alpha})\cdot \varphi(4\langle y\rangle^{-1}\alpha) \frac{d\alpha}{\alpha},\\
\end{split}
\end{equation*}
satisfy
\begin{equation}\label{RjaL^2}
\begin{split}
\vert R^{1,a}[h]\vert+\vert R^{2,a}[h]\vert &\lesssim s^2\Vert \partial_yh\Vert_{L^2}\cdot \langle y\rangle^{-\frac{3}{2}}\ln\langle y\rangle,
\end{split}
\end{equation}
which gives an acceptable contribution. On the other hand, integrating by parts and using \eqref{DFintaDa}, we see that
\begin{equation*}
\begin{split}
R^{1,b}&=\int_{\mathbb{R}_+}\partial_\alpha\left\{h(y+\alpha)+h(y-\alpha)\right\}\cdot J_\alpha[L_s]\cdot(F_\alpha +F_{-\alpha})\cdot(1-\varphi(4\langle y\rangle^{-1}\alpha))\frac{d\alpha}{\alpha}\\
&=-\int_{\mathbb{R}_+}\left\{h(y+\alpha)+h(y-\alpha)\right\}\cdot \alpha\partial_\alpha\left\{J_\alpha[L_s]\cdot(F_\alpha +F_{-\alpha})\cdot\frac{1-\varphi(4\langle y\rangle^{-1}\alpha)}{\alpha}\right\} \frac{d\alpha}{\alpha}\\
\end{split}
\end{equation*}
satisfies
\begin{equation*}
\begin{split}
\vert R^{1,b}(y)\vert&\lesssim s^2\Vert h\Vert_{L^2}\langle y\rangle^{-\frac{3}{2}}
\end{split}
\end{equation*}
and similarly for $R^{2,b}$. $R^3$ can be treated similarly as $R^2$. To finish the proof, it only remains to show
\begin{equation*}
\begin{split}
\Vert R^j[g_1;L_s\!+\!g_1]\!-\!R^j[g_1;L_s]\!-\!(R^j[g_2;L_s\!+\!g_2]\!-\! R^j[g_2;L_s])\Vert_{L^1\cap L^2}&\lesssim \left[s^2\!+\!\Vert g_1\Vert_{H^1}^2\!+\!\Vert g_2\Vert_{H^1}^2\right]\Vert g_1\!-\!g_2\Vert_{H^1}.
\end{split}
\end{equation*}

We start with the case $j=2$. The case $j=3$ is similar and will not be detailed. We decompose
\begin{equation*}
\begin{split}
R^2[g_1;L_s+g_2]&=R^{2,lo}[g_1;L_s+g_2]+R^{2,hi}[g_1;L_s+g_2],\\
R^{2,hi}[g_1;h]&:=\int_{\mathbb{R}_+}\partial_\alpha \left\{g_1(y+\alpha)-g_1(y-\alpha)\right\}\cdot\delta[h,h](y) \cdot(1+h^2) \left(F_\alpha F_{-\alpha}\right)\cdot\varphi(\alpha)\frac{d\alpha}{\alpha},\\
R^{2,lo}[g_1;h]&:=\int_{\mathbb{R}_+}\left\{g_1(y+\alpha)-g_1(y-\alpha)\right\}\cdot\alpha\partial_\alpha \left\{\delta[h,h](y) \cdot(1+h^2) \left(F_\alpha F_{-\alpha}\right)\cdot\frac{1-\varphi(\alpha)}{\alpha}\right\}\frac{d\alpha}{\alpha}.
\end{split}
\end{equation*}
Inspecting \eqref{HOTTerms} and \eqref{N21}-\eqref{N22},  we see that to control $R^{2,hi}$, it suffices to show that
\begin{equation*}
\begin{split}
\Vert \int_{\mathbb{R}_+} \vert g_1^\prime(y\pm \alpha)\vert\cdot\vert \delta_{\alpha}[L_s](y) \vert \varphi(\alpha)\frac{d\alpha}{\alpha}\Vert_{L^1\cap L^2}&\lesssim s^2\Vert g_1^\prime\Vert_{L^2},\\
\Vert \int_{\mathbb{R}_+} \vert g_1^\prime(y\pm \alpha)\vert\cdot\vert \delta[g_2,L_s](y) \vert \varphi(\alpha)\frac{d\alpha}{\alpha}\Vert_{L^1\cap L^2}&\lesssim s\Vert g_1^\prime\Vert_{L^2}\Vert g_2\Vert_{H^1},\\
\Vert \int_{\mathbb{R}_+} \vert g_1^\prime(y\pm \alpha)\vert\cdot\vert \delta[g_2,g_3](y) \vert \varphi(\alpha)\frac{d\alpha}{\alpha}\Vert_{L^1\cap L^2}&\lesssim \Vert g_1^\prime\Vert_{L^2}\Vert g_2\Vert_{H^1}\Vert g_3\Vert_{H^1}.
\end{split}
\end{equation*}
The first estimate follows by Cauchy-Schwarz as in \eqref{RjaL^2}; the last two follow using \eqref{DeltaLsg} and \eqref{Deltag1g2L^2}. Independently,
\begin{equation*}
\begin{split}
\vert R^{2,lo}_{\ge1}[g_1;L_s+g_2]\vert &\lesssim \sum_\pm\left[s+\Vert g_2\Vert_{L^\infty}\right]\int_{\mathbb{R}_+}\vert g_1(y\pm\alpha)\cdot\left[\vert\fint_{\pm\alpha,0} g_2(y)\vert+\vert\alpha\partial_\alpha\fint_{\pm\alpha} g_2(y)\vert\right] \cdot\frac{1-\varphi(\alpha)}{\alpha^2}d\alpha,
\end{split}
\end{equation*}
so that this term can easily be handled using \eqref{DFintaDa}.

It remains to consider $R^1[g_1;L_s+g_2]$. The broad ideas are the same as for $R^2$, but we need to be slightly more careful because $J_\alpha$ has less good properties than $\delta_\alpha$ and we need to take advantage of the difference of derivative in $g_1$ to compensate for this. 

We claim that it suffices to show that for any dyadic number $A>0$, there exists a decomposition $R^1[g_1;L_s+g_2]=R^{1,hi}_A[g_1;L_s+g_2]+R^{1,lo}_A[g_1;L_s+g_2]$ such that
\begin{equation}\label{SuffR1}
\begin{split}
\Vert R^{1,\ast}[g;L_s+h]\Vert_{L^1\cap L^2}&\lesssim C(\ast,A,g)\left[s^2+\Vert h\Vert_{H^1}^2\right],\\
\Vert R^{1,\ast}[g;L_s+h_2]-R^{1,\ast}[g;L_s+h_1]\Vert_{L^1\cap L^2}&\lesssim C(\ast,A,g)\Vert h_2-h_1\Vert_{H^1}\left[s+\Vert h_1\Vert_{H^1}+\Vert h_2\Vert_{H^1}\right],\\
\end{split}
\end{equation}
where
\begin{equation}\label{SuffR12}
\begin{split}
C(hi,A,g):=A^{-1}\langle A\rangle^{-\frac{1}{2}}\Vert g^{\prime\prime}\Vert_{L^2},\qquad C(lo,A,g):=A\Vert g\Vert_{L^2}. 
\end{split}
\end{equation}
Indeed, to obtain \eqref{RLs}, we only need to replace $C(\ast,A,g)$ by $\Vert g\Vert_{H^1}$. To do this, we decompose $g=\sum_Bg_B$ using a Littlewood-Paley projection such that
\begin{equation*}
\begin{split}
\Vert g_B\Vert_{L^2}\lesssim \min\{1,B^{-1}\}\Vert g\Vert_{H^1},\qquad\Vert g_B^{\prime\prime}\Vert_{L^2}\lesssim\min\{B^2,B\}\Vert g\Vert_{H^1},
\end{split}
\end{equation*}
and for each $g_B$, we choose $A:=B^\delta+B^{1-\delta}$  for $0<\delta<1/5$ and we compute that
\begin{equation*}
\begin{split}
\sum_B \left\{C(lo,A,g_B)+C(hi,A,g_B)\right\}\lesssim\sum_B\min\{B^\delta,B^{-\delta}\}\Vert g\Vert_{H^1}\lesssim \Vert g\Vert_{H^1}.
\end{split}
\end{equation*}

It now remains to prove \eqref{SuffR1}-\eqref{SuffR12}. We decompose, for $A>0$ dyadic, $R^1[g_1;L_s+g_2]=R^{1,hi}_{A}[g_1;L_s+g_2]+R^{1,lo}_{A}[g_1;L_s+g_2]$ where
\begin{equation}\label{DefR1hilo}
\begin{split}
R^{1,hi}_A[g_1,h]&:=\int_{\mathbb{R}_+}\left\{g_1^\prime(y+\alpha)-g_1^\prime(y-\alpha)\right\}\cdot J_\alpha[h]\cdot(F_\alpha +F_{-\alpha})\cdot\varphi(A\alpha)\frac{d\alpha}{\alpha},\\
R^{1,lo}_A[g_1,h]&:=\int_{\mathbb{R}_+}\left\{g_1(y+\alpha)+g_1(y-\alpha)\right\}\cdot \alpha\partial_\alpha\left\{J_\alpha[h]\cdot(F_\alpha +F_{-\alpha})\cdot\frac{1-\varphi(A\alpha)}{\alpha}\right\}\frac{d\alpha}{\alpha}.
\end{split}
\end{equation}
For $R^{1,hi}_{A}$, using \eqref{HOTTerms} and \eqref{N21}-\eqref{N22}, it suffices to show that
\begin{equation*}
\begin{split}
\Vert \int_{\mathbb{R}_+}\int_{\{\vert t\vert\le\alpha\}}\vert g_1^{\prime\prime}(y+t)\vert\cdot \vert J_\alpha[L_s]\vert \cdot\vert \fint_{\pm\alpha,0}g_2(y)\vert \cdot\varphi(A\alpha)\frac{d\alpha}{\alpha}\Vert_{L^1\cap L^2}&\lesssim s^2A^{-1}(1+A)^{-1}\Vert g_1^{\prime\prime}\Vert_{L^2}\Vert g_2\Vert_{H^1},\\ 
\Vert \int_{\mathbb{R}_+}\int_{\{\vert t\vert\le\alpha\}}\vert g_1^{\prime\prime}(y+t)\vert\cdot \vert J_\alpha[L_s,g_2]\vert \cdot\varphi(A\alpha)\frac{d\alpha}{\alpha}\Vert_{L^1\cap L^2}&\lesssim sA^{-1}(1+A)^{-\frac{1}{2}}\Vert g_1^{\prime\prime}\Vert_{L^2}\Vert g_2\Vert_{H^1},\\ 
\Vert \int_{\mathbb{R}_+}\int_{\{\vert t\vert\le\alpha\}}\vert g_1^{\prime\prime}(y+t)\vert\cdot \vert J_\alpha[g_2,g_3]\vert \cdot\varphi(A\alpha)\frac{d\alpha}{\alpha}\Vert_{L^1\cap L^2}&\lesssim A^{-1}(1+A)^{-\frac{1}{2}}\Vert g_1^{\prime\prime}\Vert_{L^2}\Vert g_2\Vert_{H^1}\Vert g_3\Vert_{H^1}.
\end{split}
\end{equation*}
The first estimate follows from \eqref{DFintaDa} and the bound \eqref{JLs}. The second and third follow from \eqref{DFintaDa} and \eqref{JgL}. We now consider the contribution of $R^{1,lo}_{A}$. Inspecting \eqref{DefR1hilo}, it suffices to show that
\begin{equation*}
\begin{split}
\int_{\mathbb{R}_+} \Vert  g_1(y\pm\alpha) \fint_{\pm\alpha}g_2(y)\left[ \vert J_\alpha[L_s]\vert+\vert \alpha\partial_\alpha (J_\alpha[L_s])\vert\right]\Vert_{L^2\cap L^1}\cdot(1-\varphi(A\alpha))\frac{d\alpha}{\alpha^2}&\lesssim s^2A\Vert g_1\Vert_{L^2}\Vert g_2\Vert_{H^1},\\
\int_{\mathbb{R}_+} \Vert  g_1(y\pm\alpha)\cdot\left(\alpha\partial_\alpha \fint_{\pm\alpha}g_2(y)\right)\vert J_\alpha[L_s]\vert\Vert_{L^2\cap L^1}\cdot(1-\varphi(A\alpha))\frac{d\alpha}{\alpha^2}&\lesssim s^2A\Vert g_1\Vert_{L^2}\Vert g_2\Vert_{H^1},\\
\int_{\mathbb{R}_+}\Vert  g_1(y\pm\alpha) \left[\vert J_\alpha[g_2,g_3]\vert+\vert\alpha\partial_\alpha (J_\alpha[g_2,g_3])\vert\right] \Vert_{L^2\cap L^1}\cdot(1-\varphi(A\alpha))\frac{d\alpha}{\alpha^2}&\lesssim A\Vert g_1\Vert_{L^2}\Vert g_2\Vert_{H^1}\Vert g_3\Vert_{L^\infty},
\end{split}
\end{equation*}
where in the last estimate we consider $g_3=L_s$ or $g_3\in H^1$. These estimates all follow from \eqref{DFintaDa} and direct integration.

\end{proof}

\begin{lemma}\label{LemT}
Assume \eqref{GlobalBoundG1G2G3} and recall $T$ and $\mathcal{T}$ defined in \eqref{DefRT}. There holds that
\begin{equation}\label{BoundsTLs}
\begin{split}
\Vert T[L_s]\Vert_{L^\frac{4}{3}\cap L^2}&\lesssim s^3,\qquad \Vert T_1[g]\Vert_{L^\frac{4}{3}\cap L^2}\lesssim s^2\Vert g\Vert_{H^1}, 
\end{split}
\end{equation}
and
\begin{equation}\label{BoundsTLipsitz}
\begin{split}
\Vert T_{\ge2}[g_2]-T_{\ge2}[g_1]\Vert_{L^\frac{4}{3}\cap L^2}&\lesssim \left[s^2+\Vert g_1\Vert_{H^1}^2+\Vert g_2\Vert_{H^1}^2\right]\Vert g_2-g_1\Vert_{H^1}.
\end{split}
\end{equation}
Besides the same bound hold if we replace $T[h]$ by $\mathcal{T}[h]=(1+h^2)T[h]$.

\end{lemma}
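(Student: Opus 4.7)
The strategy mirrors that of Lemma \ref{LemR}. Since $F'(t)=-2tF(t)^2$ is odd, the operator $T$ is essentially cubic in $h$. I would first symmetrize in $\alpha$ (using $\int_\mathbb{R} f(\alpha)d\alpha=\int_{\mathbb{R}_+}[f(\alpha)+f(-\alpha)]d\alpha$) to rewrite
\begin{equation*}
T[h](y)=-\frac{2}{\pi}\int_{\mathbb{R}_+}\left[(\Delta_\alpha h)^2\fint_\alpha h\cdot F_\alpha^2+(\Delta_{-\alpha}h)^2\fint_{-\alpha}h\cdot F_{-\alpha}^2\right]d\alpha,\qquad F_{\pm\alpha}:=F(\fint_{\pm\alpha}h),
\end{equation*}
making the cubic structure (two differences paired with one average) manifest. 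The product formulas \eqref{ProdSym} then express the integrand in terms of the elementary bricks $\delta_\alpha$, $J_\alpha$, first and second differences of $h$, and smooth factors in $\fint_{\pm\alpha}h$, allowing the bounds of Lemmas \ref{ControlDelta2} and \ref{PropJ} to be invoked.

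To establish $\|T[L_s]\|_{L^{4/3}\cap L^2}\lesssim s^3$, I would combine $\|L_s\|_{L^\infty}+\|\langle y\rangle L_s'\|_{L^\infty}\lesssim s$ and $|F'(\fint_\alpha L_s)|\lesssim s$ with the sharp estimate $|\Delta_\alpha L_s(y)|\lesssim s\langle y\rangle^{-2}$ when $|\alpha|\le\langle y\rangle/2$ and $|\Delta_\alpha L_s(y)|\lesssim s\min(\langle y\rangle^{-1},|\alpha|^{-1})$ otherwise. A case analysis on the relative size of $\alpha$ and $y$ then yields the pointwise bound
\begin{equation*}
|T[L_s](y)|\lesssim s^3\langle y\rangle^{-1},
\end{equation*}
which immediately places $T[L_s]$ in $L^{4/3}\cap L^2$ with the claimed norm. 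For the linear variation, formula \eqref{N1} gives
\begin{equation*}
T_1[g]=\frac{2}{\pi}\int_\mathbb{R}\Delta_\alpha L_s\cdot\Delta_\alpha g\cdot F'(\fint_\alpha L_s)\,d\alpha+\frac{1}{\pi}\int_\mathbb{R}(\Delta_\alpha L_s)^2F''(\fint_\alpha L_s)\fint_\alpha g\,d\alpha,
\end{equation*}
and the same case analysis, with one $L_s$ factor replaced by $g$ through Cauchy-Schwarz ($|\Delta_\alpha g(y)|\le|\alpha|^{-1/2}\|g'\|_{L^2}$, $|\fint_\alpha g(y)|\lesssim\|g\|_{H^1}$), yields $|T_1[g](y)|\lesssim s^2\|g\|_{H^1}\langle y\rangle^{-1}$.

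For the higher-order remainder $T_{\ge2}[g_2]-T_{\ge2}[g_1]$, I would apply \eqref{HOTTerms} together with the second-variation formulas \eqref{N21}-\eqref{N22}. This produces a finite sum of integrals that are bilinear in $(g_1,g_2-g_1)$ or quadratic in $(g_2-g_1)$, evaluated at a convex combination $L_s+t_1g_1+t_2(g_2-g_1)$. Each term has at least two factors drawn from $\{g_1,g_2-g_1\}$; the remaining factor contributes either $s$ (from $L_s$) or $\|g_i\|_{H^1}$ (from $g$). The pointwise bounds from Lemmas \ref{ControlDelta2} and \ref{PropJ} combined with Cauchy-Schwarz, together with absorbing the mixed cross term by AM-GM as $s\|g\|_{H^1}\le\tfrac12(s^2+\|g\|_{H^1}^2)$, deliver the claimed Lipschitz estimate in $L^{4/3}\cap L^2$. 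Finally for $\mathcal{T}[h]=(1+h^2)T[h]$, since $\|1+(L_s+g)^2\|_{L^\infty}\lesssim 1$ under the standing hypothesis, all $T$ bounds transfer directly; the extra contributions obtained by differentiating the prefactor $(1+h^2)$ (for instance $2L_sg\,T[L_s]$ in $\mathcal{T}_1[g]$) are strictly lower order and easily absorbed. The main obstacle is establishing the $L^{4/3}$ endpoint: while the $L^\infty$ bound follows almost directly from Cauchy-Schwarz, obtaining integrable-at-infinity decay in $y$ requires both the sharp $\langle y\rangle^{-2}$ decay of $L_s'$ and careful splitting on the size of $\alpha$ relative to $y$.
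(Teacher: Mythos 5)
Your treatment of $T[L_s]$ and $T_1[g]$ is essentially the paper's: both exploit $F'(t)=-2tF(t)^2$ to view $T$ as a trilinear form $T[h_1,h_2;h_3]$, and combine $\vert\Delta_\alpha L_s\vert\lesssim s(\langle\alpha\rangle+\langle y\rangle)^{-1}$ with the boundedness of $\fint_{\pm\alpha}$ to get a weighted pointwise bound of order $\langle y\rangle^{-1}$ (up to a harmless logarithm), hence membership in $L^{4/3}\cap L^2$ with the right powers of $s$. One remark on your first step: the symmetrization in $\alpha$ and the reduction to the bricks $\delta_\alpha$, $J_\alpha$ is both unnecessary and not actually available here. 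Those bricks are built from the averages $\fint_{\pm\alpha}h$, whereas the quadratic factor in $T$ is $(\Delta_\alpha h)^2$, a product of difference quotients, so \eqref{ProdSym} does not produce $\delta_\alpha$ or $J_\alpha$. More to the point, the measure in $T$ is $d\alpha$ rather than $d\alpha/\alpha$, and the two factors of $\Delta_\alpha$ already make the integrand absolutely convergent in $\alpha$; no cancellation needs to be extracted, and the paper accordingly never symmetrizes $T$.

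There is one genuine gap, in the Lipschitz estimate \eqref{BoundsTLipsitz}. The second-variation formulas \eqref{N21}--\eqref{N22} produce terms in which both perturbations occupy the difference-quotient slots, i.e.\ terms controlled by $\int_{\mathbb{R}}\vert\Delta_\alpha g_1\vert\,\vert\Delta_\alpha g_2\vert\,d\alpha$. Your pointwise bound $\vert\Delta_\alpha g\vert\le\vert\alpha\vert^{-1/2}\Vert g'\Vert_{L^2}$ gives an integrand of size $\vert\alpha\vert^{-1}$ near $\alpha=0$, which is not integrable, so ``pointwise bounds combined with Cauchy--Schwarz'' do not close for these terms (and no pointwise-in-$y$ bound can, for general $H^1$ data). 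The fix is to take the $L^1_y\cap L^2_y$ norm inside the $\alpha$-integral by Minkowski and use $\Vert\Delta_\alpha g\Vert_{L^2_y}\lesssim\min\{\Vert g'\Vert_{L^2},\vert\alpha\vert^{-1}\Vert g\Vert_{L^2}\}$ together with $\vert\Delta_\alpha g(y)\vert\le\vert\alpha\vert^{-1}\int_{\{\vert t\vert\le\vert\alpha\vert\}}\vert g'(y+t)\vert dt$, which gives $\Vert(\Delta_\alpha g_1)(\Delta_\alpha g_2)\Vert_{L^1_y\cap L^2_y}\lesssim\min\{\vert\alpha\vert^{-1/2},\vert\alpha\vert^{-3/2}\}\Vert g_1\Vert_{H^1}\Vert g_2\Vert_{H^1}$ and hence $\Vert\int_{\mathbb{R}}\vert\Delta_\alpha g_1\Delta_\alpha g_2\vert d\alpha\Vert_{L^1\cap L^2}\lesssim\Vert g_1\Vert_{H^1}\Vert g_2\Vert_{H^1}$. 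This is exactly the one additional estimate the paper isolates for \eqref{BoundsTLipsitz}; with it, the remainder of your argument, including the transfer to $\mathcal{T}[h]=(1+h^2)T[h]$, goes through.
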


\begin{proof}[Proof of Lemma \ref{LemT}]

We have that
\begin{equation*}
\begin{split}
T[h]=T[h,h;h]&=-\frac{4}{\pi}\int_{\mathbb{R}}(\Delta_\alpha h)\cdot (\Delta_\alpha h)\cdot \fint_\alpha h\cdot F^2(\fint_\alpha h)d\alpha,
\end{split}
\end{equation*}
and using that
\begin{equation*}
\begin{split}
\vert \Delta_\alpha L_s\vert\lesssim s (\langle\alpha\rangle+\langle y\rangle)^{-1},\qquad
\vert \Delta_\alpha g\vert\lesssim \min\{\alpha^{-\frac{1}{2}},\alpha^{-1}\}\Vert g\Vert_{H^1},
\end{split}
\end{equation*}
we see that
\begin{equation*}
\begin{split}
\sqrt{1+y^2}\vert T[L_s,L_s;g](y)\vert&\lesssim s^2\int \frac{\sqrt{1+y^2}}{1+\alpha^2+y^2}d\alpha\cdot \Vert g\Vert_{L^\infty}\lesssim s^2\Vert g\Vert_{L^\infty},\\
\sqrt{1+y^2}\vert T[L_s,g;h](y)\vert&\lesssim s\int \frac{\sqrt{1+y^2}}{\sqrt{1+\alpha^2+y^2}}\min\{\alpha^{-\frac{1}{2}},\alpha^{-1}\}d\alpha\cdot \Vert g\Vert_{H^1}\Vert h\Vert_{L^\infty}\lesssim s\Vert g\Vert_{H^1}\Vert h\Vert_{L^\infty}\ln\langle y\rangle.
\end{split}
\end{equation*}
and we deduce \eqref{BoundsTLs}. For \eqref{BoundsTLipsitz} it suffices to also show that
\begin{equation*}
\begin{split}
\Vert \int_{\mathbb{R}}\vert (\Delta_\alpha g_1)\cdot (\Delta_\alpha g_2)\vert d\alpha\Vert_{L^\frac{4}{3}\cap L^2}&\lesssim \Vert g_1\Vert_{H^1}\Vert g_2\Vert_{H^1},
\end{split}
\end{equation*}
but this follows from
\begin{equation*}
\begin{split}
\vert \Delta_\alpha g(y)\vert&\le\frac{1}{\vert\alpha\vert}\int_{\{\vert t\vert\le\vert\alpha\vert\}}\vert g^\prime(y+t)\vert dt,\qquad\Vert \Delta_\alpha g\Vert_{L^2_y}\lesssim \vert \alpha\vert ^{-1}\Vert g\Vert_{L^2},
\end{split}
\end{equation*}
which gives
\begin{equation*}
\begin{split}
\Vert (\Delta_\alpha g_1)(\Delta_\alpha g_2)\Vert_{L^1_y\cap L^2_y}&\lesssim \min\{\alpha^{-\frac{1}{2}},\alpha^{-\frac{3}{2}}\}\Vert g_1\Vert_{H^1}\Vert g_2\Vert_{H^1},\\
\end{split}
\end{equation*}
and the proof is complete.
%

\end{proof}

\subsection{Proof of the main estimates for the fixed-point formulation}

\subsubsection{Proof of Lemma \ref{SSLemPi}}

Starting from
\begin{equation*}
\begin{split}
(\frac{2}{\pi}\arctan(z))^2-1=(1-\frac{2}{\pi}\arctan(\frac{1}{z}))^2-1=-\frac{4}{\pi}\arctan(\frac{1}{z})+(\frac{2}{\pi}\arctan(\frac{1}{z}))^2,
\end{split}
\end{equation*}
we see that, for $z=(1+s^2/3)y\ge 1$,
\begin{equation*}
\begin{split}
\pi_s&=-s^3\frac{4}{\pi}\arctan(z)\cdot\frac{2}{\pi}\arctan(\frac{1}{z})+s^3(\frac{2}{\pi}\arctan(\frac{1}{z}))^2\cdot\frac{2}{\pi}\arctan(z),
\end{split}
\end{equation*}
and in particular, $\pi_s$ is $C^\infty$ and
\begin{equation*}
\pi_s(y)=O_{y\to0}(y),\qquad \vert \partial_y^a\pi_s(y)\vert=O_{x\to\infty}(\langle y\rangle^{-1-a}).
\end{equation*}
In particular, we observe that
\begin{equation*}
\begin{split}
\Vert \pi_s\Vert_{H^1}\lesssim s^3.
\end{split}
\end{equation*}
In addition, using \eqref{WLs}, using \eqref{RLs} and \eqref{BoundsTLs}  and direct computations, we see that
\begin{equation*}
\begin{split}
\Vert W[L_s]\partial_yL_s\Vert_{L^1\cap H^1}+\Vert R[L_s]\Vert_{L^\frac{4}{3}\cap L^2}+\Vert \mathcal{T}[L_s]\Vert_{L^\frac{4}{3}\cap L^2}&\lesssim s^3,
\end{split}
\end{equation*}
and using Lemma \ref{TOperators}, we see that
\begin{equation*}
\begin{split}
\Vert \widehat{{\bf T}}_{-1}(W[L_s]\partial_yL_s)\Vert_{H^1}+\Vert \widehat{{\bf T}}_{-1}R[L_s]\Vert_{H^1}+\Vert \widehat{{\bf T}}_{-1}\mathcal{T}[L_s]\Vert_{H^1}&\lesssim s^3.
\end{split}
\end{equation*}
\qed

\subsubsection{Proof of Lemma \ref{SSLemA}}

The proof follows by Neumann series. Using Lemma \ref{TOperators}, we see that
\begin{equation*}
\Vert (L_s^2-s^2)g\Vert_{H^1}+\Vert \widehat{{\bf T}}_{-1}(\vert\nabla\vert(L_s^2-s^2/3)g)\Vert_{H^1}\lesssim s^2\Vert g\Vert_{H^1}.
\end{equation*}
In addition, using Lemma \ref{TOperators}, and Lemma \ref{lemW}, we see that
\begin{equation*}
\begin{split}
\Vert \widehat{{\bf T}}_0[W[L_s]g\Vert_{H^1}&\lesssim \Vert W[L_s]\Vert_{L^\infty}\Vert g\Vert_{H^1}+\Vert \partial_yW[L_s]\Vert_{L^2}\Vert g\Vert_{L^\infty},\\
\Vert \widehat{{\bf T}}_{-1}[g\partial_yW[L_s]]\Vert_{H^1}&\lesssim \Vert g\Vert_{L^2\cap L^\infty}\Vert \partial_yW[L_s]\Vert_{L^2},\\
\Vert \widehat{{\bf T}}_{-1}[W_1[g]\partial_yL_s]\Vert_{H^1}&\lesssim \Vert W_1[g]\Vert_{L^\infty}\Vert \partial_yL_s\Vert_{L^1\cap L^2}.
\end{split}
\end{equation*}
Finally, using Lemma \ref{LemR} and Lemma \ref{LemT}, we see that
\begin{equation*}
\begin{split}
\Vert \widehat{{\bf T}}_{-1}[R_1[g]]\Vert_{H^1}+\Vert \widehat{{\bf T}}_{-1}[\mathcal{T}_1[g]]\Vert_{H^1}&\lesssim s^2\Vert g\Vert_{H^1}.
\end{split}
\end{equation*}
This gives the estimate of $A+Id$.

Next, we compute that
\begin{equation*}
\begin{split}
\partial_sA[g]&=-\left[12s/(3+s^2)^2+\kappa\partial_s(L_s^2-s^2)\right]g+\kappa^2\widehat{\bf T}_{-1}[\vert\nabla\vert (4s/3+\partial_s(L_s^2-s^2))g]-\kappa\widehat{\bf T}_0[g\partial_sW[L_s]]\\
&\quad+\kappa\widehat{\bf T}_{-1}[g\partial_s\partial_yW[L_s]-W_1[g]\partial_y\partial_sL_s-\partial_yL_s\partial_sW_1[g]+\partial_sR_1[g]]+\kappa\widehat{\bf T}_{-1}[\partial_s\mathcal{T}_1[g]]
\end{split}
\end{equation*}
and using that
\begin{equation*}
\begin{split}
\partial_sL_s=s^{-1}L_s+r,\qquad\Vert r\Vert_{H^1}\lesssim s^2,
\end{split}
\end{equation*}
direct calculations and adaptations of Lemma \ref{lemW}, Lemma \ref{LemR} and Lemma \ref{LemT} give the bound on $\partial_sA$.

\qed

\subsubsection{Proof of Lemma \ref{SSLemN}}

Using the fact that $H^1$ is an algebra, we see that
\begin{equation*}
\begin{split}
\Vert L_sg^2\Vert_{H^1}+\Vert g^3\Vert_{H^1}+\Vert \widehat{{\bf T}}_{-1}\left[\vert\nabla\vert(L_sg^2+g^3/3)\right]\Vert_{H^1}&\lesssim \Vert L_s\Vert_{L^\infty}\Vert g\Vert_{H^1}^2+\Vert g\Vert_{H^1}^3,
\end{split}
\end{equation*}
and since these expressions are multilinear they extend to differences. In addition, using Lemma \ref{lemW}, we see that
\begin{equation*}
\begin{split}
\Vert (W_{\ge2}[g_2]-W_{\ge2}[g_1])\p_yL_s\Vert_{L^1\cap L^2}&\lesssim \Vert W_{\ge2}[g_2]-W_{\ge2}[g_1]\Vert_{L^\infty}\Vert \p_yL_s\Vert_{L^1\cap L^2}\\
&\lesssim s\left[s^2+\Vert g_1\Vert_{H^1}^2+\Vert g_2\Vert_{H^1}^2\right]\Vert g_2-g_1\Vert_{H^1},\\
\Vert W_{\ge1}[g_1]\partial_y(g_2-g_1)\Vert_{L^1\cap L^2}&\lesssim \Vert W_{\ge1}[g_1]\Vert_{L^2\cap L^\infty}\Vert \partial_y(g_2-g_1)\Vert_{L^2},\\
\Vert (W_{\ge1}[g_2]-W_{\ge1}[g_1])\partial_yg_2\Vert_{L^1\cap L^2}&\lesssim \Vert W_{\ge1}[g_2]-W_{\ge1}[g_1]\Vert_{L^2\cap L^\infty}\Vert \partial_yg_2\Vert_{L^2}.
\end{split}
\end{equation*}
Similarly, using Lemma \ref{LemR} and Lemma \ref{LemT},
\begin{equation*}
\begin{split}
\Vert R_{\ge2}[g_2]-R_{\ge2}[g_1]\Vert_{L^1\cap L^2}+\Vert \mathcal{T}_{\ge2}[g_2]-\mathcal{T}_{\ge2}[g_1]\Vert_{L^1\cap L^2}&\lesssim \left[s^2+\Vert g_1\Vert_{H^1}^2+\Vert g_2\Vert_{H^1}^2\right]\Vert g_2-g_1\Vert_{H^1},
\end{split}
\end{equation*}
and the proof is complete.

\qed




\section{Numerical Results}
\label{SectionNumerics}

In this section, we describe how to numerically compute  the branch of solutions $k_{s}$ starting from the zero solution. Their existence for small $s$ was proved in Theorem \ref{SSTheorem}. See Figures \ref{fig_discrepancy} and \ref{fig_solutions_normalized} below for different depictions of the solutions.

The main advantage of working with the formulation of equation \eqref{SSMuskat}, as opposed to a self-similar equation for the function $f$, is that all the quantities involved are bounded. Nonetheless there are a few technicalities which we outline below.

The first step consists in changing variables and transforming the infinite domain into a finite one. We do so by setting $y = \tan(z)$ and $\tilde{k}(z) := k(\tan(z)) = k(y)$ so that the domain of definition is mapped into $\left[-\frac{\pi}{2},\frac{\pi}{2}\right]$. This change of variables has been  used successfully  for other problems in fluid mechanics (see for example \cite{Lushnikov-Silantyev-Siegel:collapse-blowup-degregorio} and references therein).

Moreover, we exploit the symmetry to gain an extra cancellation at $\frac{\pi}{2}$. After performing the change of variables and a lengthy calculation, \eqref{SSMuskat} is transformed into 

\begin{align}
0& =\sin(z)\cos(z)\tilde{k}'(z) + \frac{1}{\pi}\int_{0}^{\frac{\pi}{2}}\frac{\tilde{k}'(z)\cos(z)^2 - \tilde{k}'(y)\cos(y)^2}{\tan(z)-\tan(y)}\frac{\sec(y)^2}{1+(\tilde{\Delta}_y \tilde{k}(z))^2} dy \nonumber \\
& -\frac{2}{\pi}\int_{0}^{\frac{\pi}{2}}\left(\frac{\tilde{k}(z)-\tilde{k}(y)}{\tan(z)-\tan(y)}\right)^2\frac{\tilde{\Delta}_y \tilde{k}(z)}{(1+(\tilde{\Delta}_y \tilde{k}(z))^2)^2}\sec(y)^2 dy \nonumber \\
& + \frac{1}{\pi}\int_{0}^{\frac{\pi}{2}}\frac{\tilde{k}'(z)\cos(z)^2 - \tilde{k}'(y)\cos(y)^2}{\tan(z)+\tan(y)}\frac{1}{1+(\tilde{\Delta}_{-y} \tilde{k}(z))^2}\sec(y)^2 dy \nonumber \\
& -\frac{2}{\pi}\int_{0}^{\frac{\pi}{2}}\left(\frac{\tilde{k}(z)+\tilde{k}(y)}{\tan(z)+\tan(y)}\right)^2\frac{\tilde{\Delta}_{-y} \tilde{k}(z)}{(1+(\tilde{\Delta}_{-y} \tilde{k}(z))^2)^2}\sec(y)^2 dy,
\label{SSMuskatCoordinateChange}
\end{align}
where
\begin{align}
\label{DeltaTilde}
\tilde{\Delta}_{y} \tilde{k}(z) = \int_{z}^{y} \frac{\sec(w)^2\tilde{k}(w)}{\tan(y)-\tan(z)}dw.
\end{align}
We performed continuation in $s$ in increments of $\Delta s = 0.1$ and did $70$ iterations, using as initial guess $\tilde{k} = \Delta s \frac{2}{\pi}z$ (the linear approximation) at the first iteration, and for the subsequent ones the result of the previous iteration plus $\Delta s \frac{2}{\pi}z$ to ensure that the updated boundary condition at $z=\frac{\pi}{2}$ is satisfied. In the range we computed, we did not see any impediment towards advancing in $s$, other than computation time, although the errors become bigger as $s$ grows and the algorithm may take one or two more iterations to converge for $s \sim 5$ than for $s \sim 0.1$.

To compute a solution for a fixed $s$, we used the Levenberg-Marquardt algorithm \cite{Levenberg:Levenberg-Marquardt,Marquardt:Levenberg-Marquardt}. 
Our discretization variables consist on the values of $\tilde{k}$ at gridpoints $z_i = \frac{\pi}{2(N-1)}i$, where we are using that $\tilde{k}$ is odd to solve for positive $z$ only. Other strategies such as non-uniform meshes (concentrating points towards 0) or $s$-dependent compactifications would perhaps improve the performance for large $s$ since $\tilde{k}'(0)$ grows with $s$ (see Figure \ref{fig_discrepancy}), but we did not explore them here. We took $N = 129$ and the discrete system we solved was equation \eqref{SSMuskatCoordinateChange} evaluated at $z_i$, $i= 1,\ldots,N-1$ plus the boundary conditions $\tilde{k}(0) = 0, \tilde{k}\left(\frac{\pi}{2}\right) = s$. In order to compute the derivatives we calculated a spline of degree 4 interpolating through the discrete grid and approximated the derivatives of $\tilde{k}$ by the derivatives of the spline. To perform the integration, we integrated in \eqref{SSMuskatCoordinateChange} in the variable $y$ using trapezoidal integration and a grid of $10N = 1290$ points. We also tried finer grids and saw virtually no difference with respect to the results. In order to get stable results, we took care of splitting the domain into 3 regions $(z \sim \frac{\pi}{2}, z \sim y$ and the rest$)$ and computing separately the integrand in each of them, carefully computing the limit. For example, note that despite being bounded at $z \sim \frac{\pi}{2}$, there is a strong instability coming from the fact that the integrand is of the form $\infty - \infty$ if not dealt with properly. The inner integrals in \eqref{SSMuskatCoordinateChange} (i.e. the $\tilde{\Delta}$ integrals) were computed using an adaptive Gauss-Kronrod quadrature of 15 points, also taking care of the limits at $\frac{\pi}{2}$ and at $y$.

\begin{figure}[h!]
\begin{center}
\includegraphics[scale=0.2]{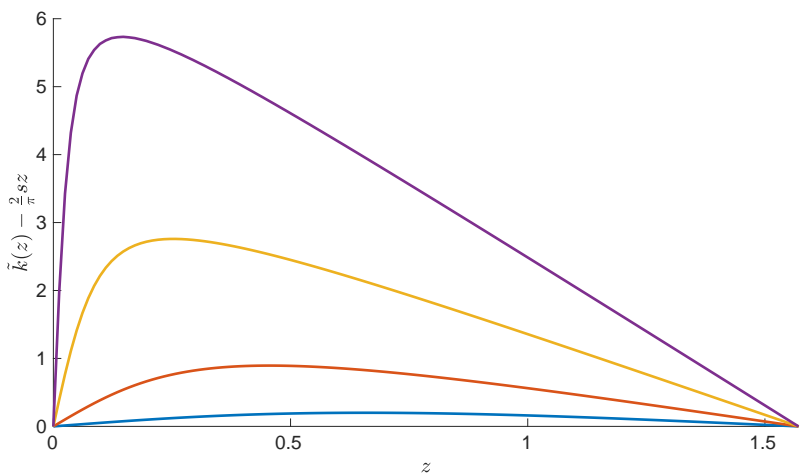}
\caption{\label{fig_discrepancy} Plot of the difference between the numerically computed solutions $\tilde{k}(z)$ and the linear approximation $\frac{2}{\pi}sz = \frac{2}{\pi}s\tan(y)$ for $s=1,2,4,7$. The branch continues beyond what is calculated.}
\end{center}
\end{figure}

\begin{figure}[h!]
\begin{center}
\includegraphics[scale=0.2]{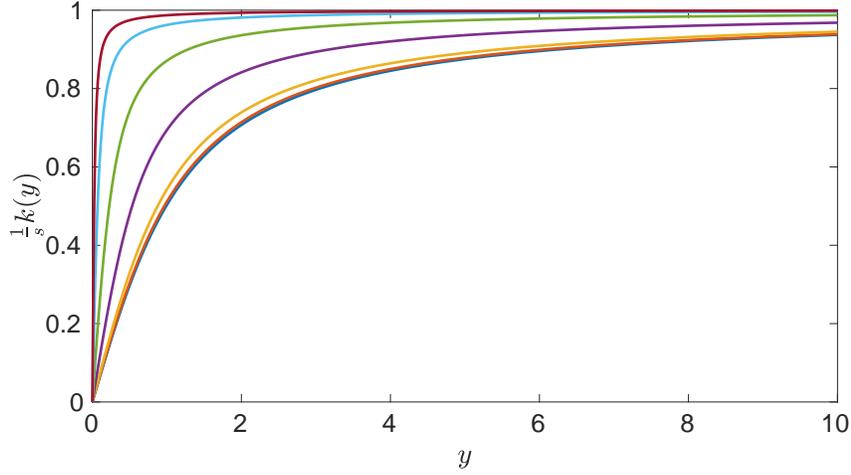}
\caption{\label{fig_solutions_normalized} Comparison of the numerically computed solutions $k(y)$, $y \in [0,10]$ for $s=0.1,0.2,0.4,1,2,4,7$. All functions are normalized to value 1 at infinity. The branch continues beyond what is calculated. The curve corresponding to $s=0.1$ is the lowest one and monotonically increase with $s$.}
\end{center}
\end{figure}

\begin{figure}[h!]
\hspace*{-1.2cm}\begin{tabular}{cc}
 \includegraphics[width=0.45\textwidth]{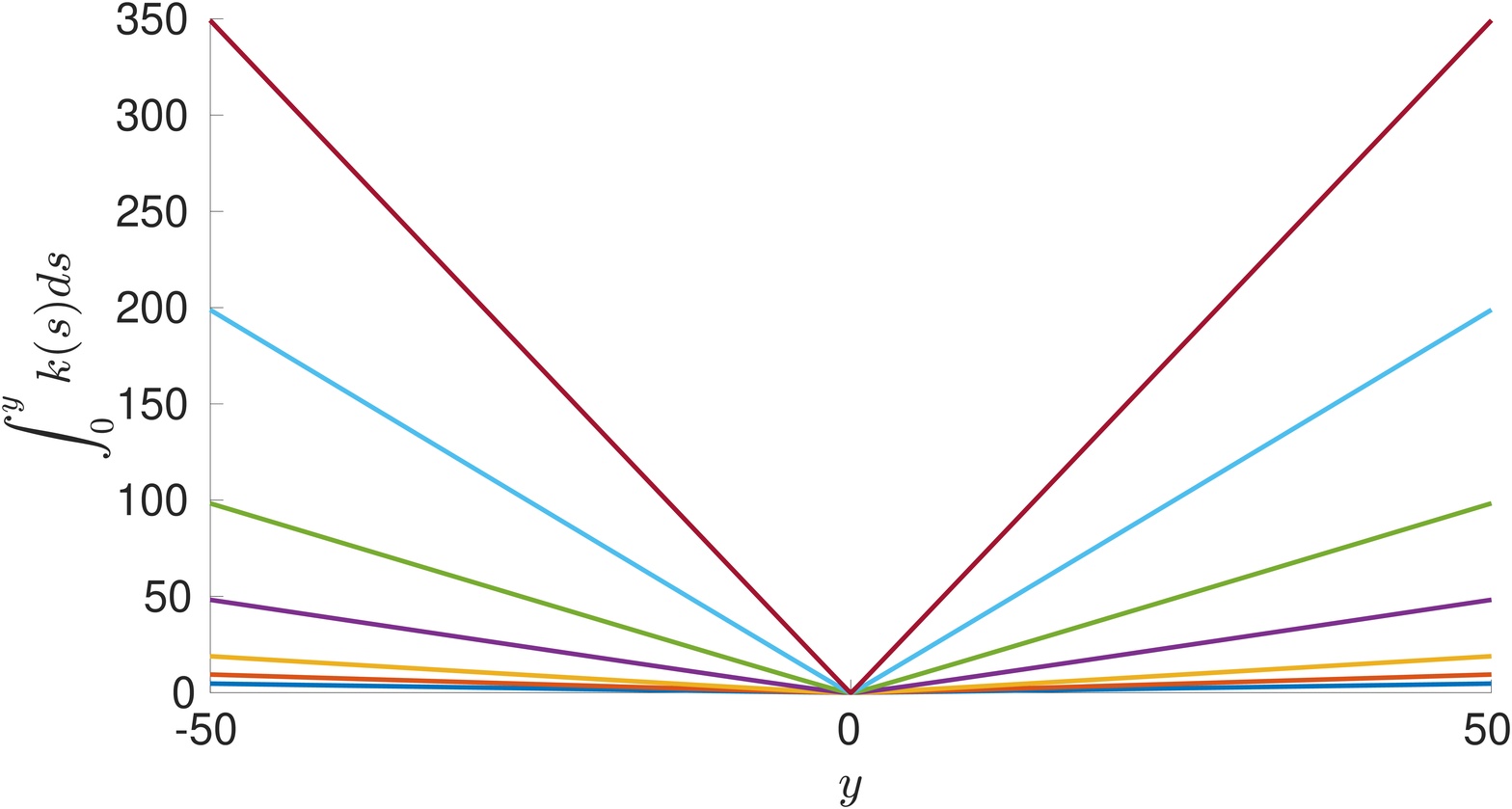}&
 \includegraphics[width=0.45\textwidth]{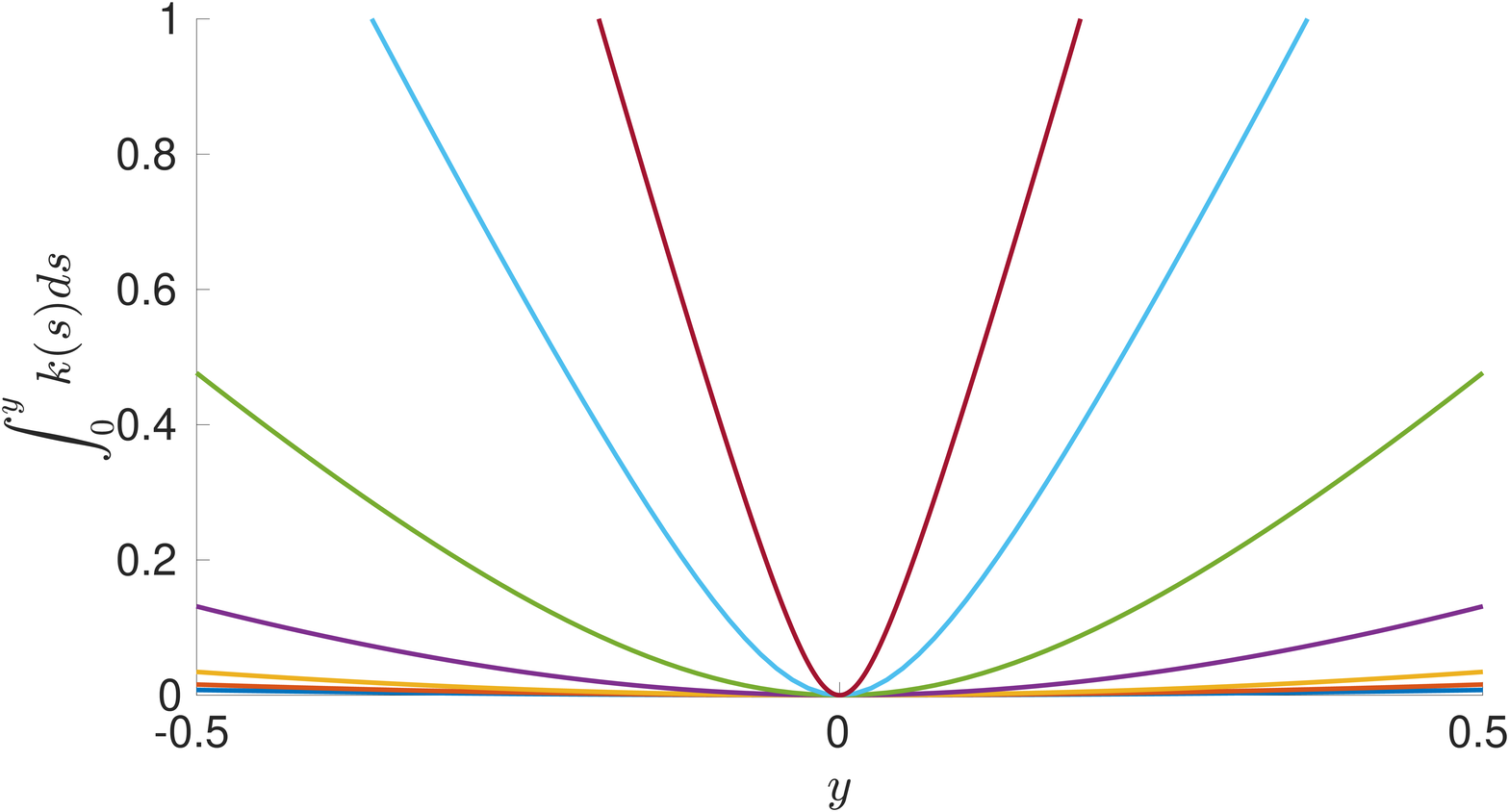} 
\end{tabular}
 \caption{Comparison of the numerically computed solutions $\int_{0}^{y} k(s)ds$,  for $s=0.1,0.2,0.4,1,2,4,7$. Panel (a): $y \in [-50,50]$, panel (b): Close-up, $y \in [-0.5,0.5]$. The curves increase with $s$.
} \label{fig_inteh}
\end{figure}

\subsection*{{\bf Acknowledgments}}
 EGJ and JGS were partially supported by the ERC Starting Grant ERC-StG-CAPA-852741. EGJ was partially supported by the ERC Starting Grant project H2020-EU.1.1.-639227. 
 HQN was partially supported by NSF grant DMS-1907776. BP was supported by NSF grant DMS-1700282 and the CY-Advanced Studies fellow program. We thank Princeton University for computing facilities (Polar Cluster).
 This project has received funding from the European Union’s Horizon 2020 research and innovation programme under the Marie Sklodowska-Curie grant agreement CAMINFLOW No 101031111.
 
\vspace{0.3cm}

\bibliographystyle{plain}
\bibliography{references}

\def\cprime{$'$}
\begin{thebibliography}{10}

\bibitem{Abedin-Schwab:Hele-Shaw-dini-2020}
Farhan Abedin and Russell~W. Schwab.
\newblock Regularity for a special case of two-phase {H}ele-{S}haw flow via
  parabolic integro-differential equations.
\newblock {\em ArXiv preprint, arXiv:2008.01272}, 2020.

\bibitem{Alazard:Convexity-Hele-Shaw-2021}
Thomas Alazard.
\newblock Convexity and the {H}ele-{S}haw equation.
\newblock {\em Water Waves}, 3(1):5--23, 2021.

\bibitem{Alazard-Lazar:Paranlinearization-Muskat-2020}
Thomas Alazard and Omar Lazar.
\newblock Paralinearization of the {M}uskat equation and application to the
  {C}auchy problem.
\newblock {\em Arch. Ration. Mech. Anal.}, 237(2):545--583, 2020.

\bibitem{Alazard-Meunier-Smets:Lyapunov-Hele-Shaw-2020}
Thomas Alazard, Nicolas Meunier, and Didier Smets.
\newblock Lyapunov functions, identities and the {C}auchy problem for the
  {H}ele-{S}haw equation.
\newblock {\em Comm. Math. Phys.}, 377(2):1421--1459, 2020.

\bibitem{Alazard-Nguyen:Endpoint-Sobolev-Muskat-2020}
Thomas Alazard and Quoc-Hung Nguyen.
\newblock Endpoint sobolev theory for the {M}uskat equation.
\newblock {\em Arxiv preprint, arXiv:2010.06915}, 2020.

\bibitem{Alazard-Nguyen:Muskat-Critical-II-2021}
Thomas Alazard and Quoc-Hung Nguyen.
\newblock On the {C}auchy problem for the {M}uskat equation. {II}: {C}ritical
  initial data.
\newblock {\em Ann. PDE}, 7(1):Paper No. 7, 25, 2021.

\bibitem{Alazard-Nguyen:Muskat-non-lipschitz-2020}
Thomas Alazard and Quoc-Hung Nguyen.
\newblock On the {C}auchy problem for the {M}uskat equation with non-lipschitz
  initial data.
\newblock {\em Commun. Partial Differ. Equ.,
  doi.org/10.1080/03605302.2021.1928700}, 2021.

\bibitem{Ambrose:well-posedness-hele-shaw-2004}
David~M. Ambrose.
\newblock Well-posedness of two-phase {H}ele-{S}haw flow without surface
  tension.
\newblock {\em European J. Appl. Math.}, 15(5):597--607, 2004.

\bibitem{Bazaliy-Vasylyeva:Two-phase-Hele-Shaw-corners-without-surface-tension-2014}
B.~V. Bazaliy and N.~Vasylyeva.
\newblock The two-phase {H}ele-{S}haw problem with a nonregular initial
  interface and without surface tension.
\newblock {\em Zh. Mat. Fiz. Anal. Geom.}, 10(1):3--43, 152, 155, 2014.

\bibitem{Bazaliy-Vasylyeva:Two-phase-Hele-Shaw-surface-tension-corner-2011}
Borys~V. Bazaliy and Nataliya Vasylyeva.
\newblock The {M}uskat problem with surface tension and a nonregular initial
  interface.
\newblock {\em Nonlinear Anal.}, 74(17):6074--6096, 2011.

\bibitem{Cameron:global-wellposedness-muskat-slope-less-1-2019}
Stephen Cameron.
\newblock Global well-posedness for the two-dimensional {M}uskat problem with
  slope less than 1.
\newblock {\em Anal. PDE}, 12(4):997--1022, 2019.

\bibitem{Cameron:global-Muskat-3D-2020}
Stephen Cameron.
\newblock Global well-posedness for the 3{D} {M}uskat problem with medium size
  slope.
\newblock {\em Arxiv preprint, arXiv:2002.00508}, 2020.

\bibitem{Castro-Cordoba-Fefferman-Gancedo-LopezFernandez:rayleigh-taylor-breakdown-2012}
{\'A}.~Castro, D.~C{\'o}rdoba, C.~Fefferman, F.~Gancedo, and
  M.~L{\'o}pez-Fern{\'a}ndez.
\newblock Rayleigh-{T}aylor breakdown for the {M}uskat problem with
  applications to water waves.
\newblock {\em Ann. of Math. (2)}, 175:909--948, 2012.

\bibitem{Castro-Cordoba-Fefferman-Gancedo:breakdown-muskat-2013}
{\'A}ngel Castro, Diego C{\'o}rdoba, Charles Fefferman, and Francisco Gancedo.
\newblock Breakdown of {S}moothness for the {M}uskat {P}roblem.
\newblock {\em Arch. Ration. Mech. Anal.}, 208(3):805--909, 2013.

\bibitem{Castro-Cordoba-Fefferman-Gancedo:splash-singularities-one-phase-muskat-stable-2016}
\'{A}ngel Castro, Diego C{\'o}rdoba, Charles Fefferman, and Francisco Gancedo.
\newblock Splash {S}ingularities for the {O}ne-{P}hase {M}uskat {P}roblem in
  {S}table {R}egimes.
\newblock {\em Arch. Ration. Mech. Anal.}, 222(1):213--243, 2016.

\bibitem{ChangLara-Guillen-Schwab:Hele-Shaw-2019}
H\'{e}ctor~A. Chang-Lara, Nestor Guillen, and Russell~W. Schwab.
\newblock Some free boundary problems recast as nonlocal parabolic equations.
\newblock {\em Nonlinear Anal.}, 189:11538, 60, 2019.

\bibitem{Chen-Nguyen-Xu:Muskat-C1-2021}
Ke~Chen, Quoc-Hung Nguyen, and Yiran Xu.
\newblock The {M}uskat problem with ${C}^1$ data.
\newblock {\em Arxiv preprint, arXiv:2010.06915}, 2021.

\bibitem{Cheng-GraneroBelinchon-Shkoller:well-posedness-h2-muskat2016}
C.~H.~Arthur Cheng, Rafael Granero-Belinch\'on, and Steve Shkoller.
\newblock Well-posedness of the {M}uskat problem with ${H}^{2}$ initial data.
\newblock {\em Adv. Math.}, 286:32 -- 104, 2016.

\bibitem{Choi-Jerison-Kim:One-phase-Hele-Sahw-Lipschit-2007}
Sunhi Choi, David Jerison, and Inwon Kim.
\newblock Regularity for the one-phase {H}ele-{S}haw problem from a {L}ipschitz
  initial surface.
\newblock {\em Amer. J. Math.}, 129(2):527--582, 2007.

\bibitem{Constantin-Cordoba-Gancedo-Strain:global-existence-muskat-2013}
Peter Constantin, Diego C{\'o}rdoba, Francisco Gancedo, and Robert~M. Strain.
\newblock On the global existence for the {M}uskat problem.
\newblock {\em J. Eur. Math. Soc. (JEMS)}, 15(1):201--227, 2013.

\bibitem{Constantin-Gancedo-Shvydkoy-Vicol:global-regularity-muskat-finite-slope-2017}
Peter Constantin, Francisco Gancedo, Roman Shvydkoy, and Vlad Vicol.
\newblock Global regularity for 2{D} {M}uskat equations with finite slope.
\newblock {\em Ann. Inst. H. Poincar\'e Anal. Non Lin\'eaire},
  34(4):1041--1074, 2017.

\bibitem{Cordoba-Cordoba-Gancedo:interface-heleshaw-muskat-2011}
Antonio C{\'o}rdoba, Diego C{\'o}rdoba, and Francisco Gancedo.
\newblock Interface evolution: the {H}ele-{S}haw and {M}uskat problems.
\newblock {\em Ann. of Math. (2)}, 173(1):477--542, 2011.

\bibitem{Cordoba-Gancedo:contour-dynamics-3d-porous-medium-2007}
Diego C{\'o}rdoba and Francisco Gancedo.
\newblock Contour dynamics of incompressible 3-{D} fluids in a porous medium
  with different densities.
\newblock {\em Comm. Math. Phys.}, 273(2):445--471, 2007.

\bibitem{Cordoba-Gancedo:maximum-principle-muskat-2009}
Diego C{\'o}rdoba and Francisco Gancedo.
\newblock A maximum principle for the {M}uskat problem for fluids with
  different densities.
\newblock {\em Comm. Math. Phys.}, 286(2):681--696, 2009.

\bibitem{Cordoba-GomezSerrano-Zlatos:stability-shifting-muskat-2015}
Diego C{\'o}rdoba, Javier G{\'o}mez-Serrano, and Andrej Zlato{\v s}.
\newblock A note on stability shifting for the {M}uskat problem.
\newblock {\em Philos. Trans. Roy. Soc. A}, 373(2050):20140278, 10, 2015.

\bibitem{Cordoba-GomezSerrano-Zlatos:stability-shifting-muskat-II-2017}
Diego C\'ordoba, Javier G\'omez-Serrano, and Andrej Zlato{\v s}.
\newblock A note on stability shifting for the {M}uskat problem, {II}: {F}rom
  stable to unstable and back to stable.
\newblock {\em Anal. PDE}, 10(2):367--378, 2017.

\bibitem{Cordoba-Lazar:global-wellposedness-muskat-H32-2018}
Diego C\'ordoba and Omar Lazar.
\newblock Global well-posedness for the 2{D} stable {M}uskat problem in
  ${H}^{3/2}$.
\newblock {\em Ann. Sci. Éc. Norm. Supér, arXiv:1803.07528}, 2018.
\newblock To appear.

\bibitem{Deng-Lei-Lin:2d-muskat-monotone-data-2017}
Fan Deng, Zhen Lei, and Fanghua Lin.
\newblock On the two-dimensional {M}uskat problem with monotone large initial
  data.
\newblock {\em Comm. Pure Appl. Math.}, 70(6):1115--1145, 2017.

\bibitem{Dong-Gancedo-Nguyen:Global-one-phase-Muskat-2021}
Hongjie Dong, Francisco Gancedo, and Huy~Q. Nguyen.
\newblock Global well-posedness for the one-phase {M}uskat problem.
\newblock {\em Arxiv preprint, arXiv:2010.06915}, 2021.

\bibitem{Escher-Matioc-Matioc:Think-film-Muskat-2012}
Joachim Escher, Anca-Voichita Matioc, and Bogdan-Vasile Matioc.
\newblock Modelling and analysis of the {M}uskat problem for thin fluid layers.
\newblock {\em J. Math. Fluid Mech.}, 14(2):267--277, 2012.

\bibitem{Escher-Matioc:Muskat-parabolicity-fingering-2011}
Joachim Escher and Bogdan-Vasile Matioc.
\newblock On the parabolicity of the {M}uskat problem: well-posedness,
  fingering, and stability results.
\newblock {\em Z. Anal. Anwend.}, 30(2):193--218, 2011.

\bibitem{Escher-Simonett:solutions-muskat-surface-tension-1997}
Joachim Escher and Gieri Simonett.
\newblock Classical solutions for {H}ele-{S}haw models with surface tension.
\newblock {\em Adv. Differential Equations}, 2(4):619--642, 1997.

\bibitem{Gancedo-GraneroBelinchon-Stefano:Thin-film-Muskat-2020}
Francisco Gancedo, Rafael Granero-Belinch\'{o}n, and Stefano Scrobogna.
\newblock Surface tension stabilization of the {R}ayleigh-{T}aylor instability
  for a fluid layer in a porous medium.
\newblock {\em Ann. Inst. H. Poincar\'{e} Anal. Non Lin\'{e}aire},
  37(6):1299--1343, 2020.

\bibitem{Gancedo-Strain:absence-splash-muskat-SQG-2014}
Francisco Gancedo and Robert~M. Strain.
\newblock Absence of splash singularities for surface quasi-geostrophic sharp
  fronts and the {M}uskat problem.
\newblock {\em Proc. Natl. Acad. Sci. USA}, 111(2):635--639, 2014.

\bibitem{Kim:uniqueness-existence-hele-shaw-stefan-2003}
Inwon Kim.
\newblock Uniqueness and existence results on the {H}ele-{S}haw and the
  {S}tefan problems.
\newblock {\em Arch. Ration. Mech. Anal.}, 168(4):299--328, 2003.

\bibitem{Kim:long-time-regularity-hele-shaw-2006}
Inwon Kim.
\newblock Long time regularity of solutions of the {H}ele-{S}haw problem.
\newblock {\em Nonlinear Anal.}, 64(12):2817--2831, 2006.

\bibitem{Kim:one-phase-hele-shaw-2006}
Inwon Kim.
\newblock Regularity of the free boundary for the one phase {H}ele-{S}haw
  problem.
\newblock {\em J. Differential Equations}, 223(1):161--184, 2006.

\bibitem{Laurencot-Matioc:Self-similarity-thin-film-Muskat-2017}
Philippe Lauren\c{c}ot and Bogdan-Vasile Matioc.
\newblock Self-similarity in a thin film {M}uskat problem.
\newblock {\em SIAM J. Math. Anal.}, 49(4):2790--2842, 2017.

\bibitem{Levenberg:Levenberg-Marquardt}
Kenneth Levenberg.
\newblock A method for the solution of certain non-linear problems in least
  squares.
\newblock {\em Quart. Appl. Math.}, 2:164--168, 1944.

\bibitem{Lushnikov-Silantyev-Siegel:collapse-blowup-degregorio}
Pavel Lushnikov, Denis~A. Silantyev, and Michael Siegel.
\newblock Collapse vs. blow up and global existence in the generalized
  {C}onstantin-{L}ax-{M}ajda equation.
\newblock {\em arXiv preprint arXiv:2010.01201}, 2020.

\bibitem{Marquardt:Levenberg-Marquardt}
Donald~W. Marquardt.
\newblock An algorithm for least-squares estimation of nonlinear parameters.
\newblock {\em J. Soc. Indust. Appl. Math.}, 11:431--441, 1963.

\bibitem{Matioc:Muskat-2d-formulations-2019}
Bogdan-Vasile Matioc.
\newblock The {M}uskat problem in two dimensions: equivalence of formulations,
  well-posedness, and regularity results.
\newblock {\em Anal. PDE}, 12(2):281--332, 2019.

\bibitem{Nguyen:Muskat-Besov-small-2021}
Huy~Q. Nguyen.
\newblock Global solutions for the {M}uskat problem in the scaling invariant
  besov space $\dot{B}_{\infty,1}^1$.
\newblock {\em Arxiv preprint arXiv:2103.14535}, 2021.

\bibitem{Nguyen-Pausader:Paradifferential-Muskat-2020}
Huy~Q. Nguyen and Beno\^{\i}t Pausader.
\newblock A paradifferential approach for well-posedness of the {M}uskat
  problem.
\newblock {\em Arch. Ration. Mech. Anal.}, 237(1):35--100, 2020.

\bibitem{Siegel-Caflisch-Howison:global-existence-muskat-2004}
Michael Siegel, Russel~E. Caflisch, and Sam Howison.
\newblock Global existence, singular solutions, and ill-posedness for the
  {M}uskat problem.
\newblock {\em Comm. Pure Appl. Math.}, 57(10):1374--1411, 2004.

\bibitem{Yi:Loca-classical-Muskat-1996}
Fahuai Yi.
\newblock Local classical solution of {M}uskat free boundary problem.
\newblock {\em J. Partial Differ. Equ.}, 9(1):84--96, 1996.

\bibitem{Yi:Global-classical-Muskat-2003}
Fahuai Yi.
\newblock Global classical solution of {M}uskat free boundary problem.
\newblock {\em J. Math. Anal. Appl.}, 288(2):442--461, 2003.

\end{thebibliography}

\vspace{1cm}

\end{document}